\theoremstyle{theorem}
\newtheorem{theorem}{Theorem}
\newtheorem{corollary}[theorem]{Corollary}
\newtheorem{prop}[theorem]{Proposition}
\newtheorem{lemma}[theorem]{Lemma}
\theoremstyle{definition}
\newtheorem{problem}[theorem]{Problem}
\def\M{\mathcal{M}}
\def\R{\mathbb{R}}
\def\X{\mathcal{X}}
\def\C{\mathcal{C}}
\def\A{\mathcal{A}}
\def\I{\mathcal{I}}
\def\SI{\mathcal{SI}}
\def\K{\mathcal{K}}
\def\T{\mathcal{T}}
\def\h{\mathfrak{h}}
\def\B{\mathcal{B}}
\def\SB{\mathcal{SB}}
\def\S{\Sigma}
\def\Mod{{\rm Mod}}
\def\SMod{{\rm SMod}}
\def\Stab{{\rm Stab}}
\def\id{{\rm id}}
\def\Z{\mathbb{Z}}
\def\Q{\mathbb{Q}}
\def\Homeo{{\rm Homeo}}
\def\SHomeo{{\rm SHomeo}}
\def\H{{\rm H}}
\def\Sp{{\rm Sp}}
\def\SL{{\rm SL}}
\def\TT{{\rm Teich}}
\newcommand{\cd}{\mathop{\mathrm{cd}}}
\newcommand{\sign}{\mathop{\mathrm{sign}}}
\newcommand{\Arf}{\mathop{\mathrm{Arf}}}
\def\M{\mathcal{M}}
\def\R{\mathbb{R}}
\def\X{\mathcal{X}}
\def\C{\mathcal{C}}
\def\A{\mathcal{A}}
\def\I{\mathcal{I}}
\def\SI{\mathcal{SI}}
\def\K{\mathcal{K}}
\def\T{\mathcal{T}}
\def\h{\mathfrak{h}}
\def\B{\mathcal{B}}
\def\SB{\mathcal{SB}}
\def\S{\Sigma}
\def\Mod{{\rm Mod}}
\def\SMod{{\rm SMod}}
\def\Stab{{\rm Stab}}
\def\id{{\rm id}}
\def\Z{\mathbb{Z}}
\def\Q{\mathbb{Q}}
\def\Homeo{{\rm Homeo}}
\def\H{{\rm H}}
\def\Sp{{\rm Sp}}
\def\SL{{\rm SL}}
\def\lk{{\rm lk}}
\def\TT{{\rm Teich}}
\numberwithin{theorem}{section}
\begin{document}

	\title{On the Second Homology of the Genus 3 Hyperelliptic Torelli Group}
	
	\address{National Research University Higher School of Economics, Moscow 119048, Russia}
	\email{spiridonovia@ya.ru}
	\author{Igor Spiridonov}
	
	\subjclass[2010]{20F34 (Primary); 20F36, 57M07, 20J05 (Secondary)}

	\maketitle

\begin{abstract}
	Let $s$ be a fixed hyperelliptic involution of the closed, oriented genus $g$ surface $\Sigma_g$. The hyperelliptic Torelli group $\mathcal{SI}_g$ is the subgroup of the mapping class group $\mathrm{Mod}(\Sigma_g)$ consisting of elements that act trivially on $\mathrm{H}_1(\Sigma_g;\mathbb{Z})$ and commute with $s$. It is generated by Dehn twists about $s$-invariant separating curves, and its cohomological dimension is $g-1$. In this paper we study the top homology group $\mathrm{H}_2(\mathcal{SI}_3;\mathbb{Z})$. For each pair of disjoint $s$-invariant separating curves there is a naturally associated abelian cycle in $\mathrm{H}_2(\mathcal{SI}_3;\mathbb{Z})$; we call such cycles \emph{simple}. We show that simple abelian cycles are in bijection with orthogonal (with respect to the intersection form) splittings of $\mathrm{H}_1(\Sigma_3;\mathbb{Z})$ satisfying a simple algebraic condition, and prove that these abelian cycles are linearly independent in $\mathrm{H}_2(\mathcal{SI}_3;\mathbb{Z})$.
\end{abstract}

\section{Introduction}
\subsection{Hyperelliptic Torelli Group}
Let $\S_{g}$ be a compact oriented genus $g$ surface. Let $\Mod(\S_{g})$ be the \textit{mapping class group} of $\S_{g}$, defined by $\Mod(\S_{g}) = \pi_{0}(\Homeo^{+}(\S_{g}))$, where $\Homeo^{+}(\S_{g})$ is the group of orientation-preserving homeomorphisms of $\S_{g}$. The group $\Mod(\S_{g})$ acts on $\H = \H_{1}(\S_{g}, \Z)$ and preserves the algebraic intersection form, so we have a representation $\Mod(\S_{g}) \rightarrow \Sp(2g, \Z)$, which is well-known to be surjective. The kernel $\I_g$ of this representation is known as the \textit{Torelli group}. This can be written as the short exact sequence 
\begin{equation*}
1 \rightarrow \I_g \rightarrow \Mod(\S_{g}) \rightarrow \Sp(2g, \Z) \rightarrow 1.
\end{equation*}
Let $s$ be some fixed hyperelliptic involution on $\S_g$. The \textit{hyperelliptic Torelli group} $\SI_g$ is a subgroup of $\I_g$ consisting of all elements that commute with $s$. Note that any Dehn twist about $s$-invariant separating curve belong to $\SI_g$, therefore this group is contained in the \textit{Johnson kernel} $\K_g$, which is a subgroup of $Mod(\S_{g})$ generated by Dehn twists about separating curves; we have the chain of inclusions
\begin{equation} \label{inclusions}
	\SI_g \subseteq \K_g \subseteq \I_g \subseteq  \Mod(\S_{g}).
\end{equation}
Moreover, Brendle, Margalit and Putman proved \cite{BrendleMP} that the group $\SI_g$ is generated by Dehn twists about $s$-invariant separating curves. 

If $g=1$ we have $\Mod(\S_1) \cong \SL(2, \Z)$, so the groups $\SI_1 = \K_1 = \I_1 = \{\id\}$ are trivial. For $g = 2$, we still have the equality $\SI_2 = \K_2 = \I_2$. McCullough and Miller \cite{McCullough} proved, that the group $\I_2$ is not finitely generated. Later, Mess \cite{Mess} showed that $\I_2$, and hence $\SI_2$, is an infinitely generated free group. For $g \geq 3$, all the inclusions in (\ref{inclusions}) are strict.

\subsection{Results}
Recall that the \textit{cohomological dimension} $\cd(G)$ of a group $G$ is the supremum over all $n$ so that there exists a $G$-module $M$ with $\H^n(G, M) \neq 0$.
In 2013, Brendle, Childers, and Margalit \cite{Brendle} proved that $\cd(\SI_g) = g-1$ and showed that the top homology group $\H_{g-1}(\SI_g, \Z)$ is not finitely generated. In this paper we focus on the structure of the top homology group $\H_{2}(\SI_3, \Z)$.

Recall that for $n$ pairwise commuting elements $h_{1}, \dots, h_{n}$ of the group $G$ one can construct an \textit{abelian cycle} $\A(h_{1}, \dots, h_{n}) \in \H_{n}(G, \Z)$ defined by in the following way. Consider the homomorphism $\phi: \Z^{n} \rightarrow G$ that maps the generator of the $i$-th factor to the $h_{i}$. Then $\A(h_{1}, \dots, h_{n}) = \phi_{*}(\mu_{n})$, where $\mu_{n}$ is the standard generator of $\H_{n}(\Z^{n}, \Z)$.

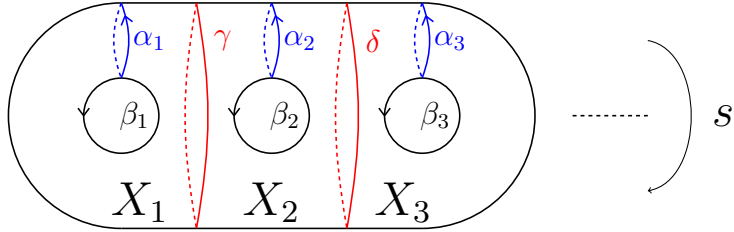
\begin{figure}[h]
	\begin{center}
		\scalebox{0.5}{
			\begin{tikzpicture}
			\draw[red, very thick, dashed] (2,3) to[out = -100, in = 100] (2, -3);
			\draw[red, very thick, dashed] (-2,3) to[out = -100, in = 100] (-2, -3);
			
			\draw[very thick] (-4, 3) to (4, 3);
			\draw[very thick] (-4, -3) to (4, -3);
			\draw[very thick] (-4, 3) arc (90:270:3);
			\draw[very thick] (4, 3) arc (90:-90:3);
			\draw[very thick] (0, 0) circle (1);
			\draw[very thick] (-4, 0) circle (1);
			\draw[very thick] (4, 0) circle (1);
			
			\draw[blue, very thick] (-3.9, 2.65) to (-3.7, 2.5);
			\draw[blue, very thick] (-3.9, 2.65) to (-4, 2.4);
			
			\draw[blue, very thick] (-3.9+4, 2.65) to (-3.7+4, 2.5);
			\draw[blue, very thick] (-3.9+4, 2.65) to (-4+4, 2.4);
			
			\draw[blue, very thick] (-3.9+8, 2.65) to (-3.7+8, 2.5);
			\draw[blue, very thick] (-3.9+8, 2.65) to (-4+8, 2.4);
			
			\draw[very thick] (-5, 0) to (-5.15, 0.2);
			\draw[very thick] (-5, 0) to (-4.85, 0.2);
			
			\draw[very thick] (-5+4, 0) to (-5.15+4, 0.2);
			\draw[very thick] (-5+4, 0) to (-4.85+4, 0.2);
			
			\draw[very thick] (-5+8, 0) to (-5.15+8, 0.2);
			\draw[very thick] (-5+8, 0) to (-4.85+8, 0.2);
			
			\draw[blue, very thick] (4,3) to[out = -70, in = 70] (4, 1);
			\draw[blue, very thick, dashed] (4,3) to[out = -110, in = 110] (4, 1);
			
			\draw[blue, very thick] (0,3) to[out = -70, in = 70] (0, 1);
			\draw[blue, very thick, dashed] (0,3) to[out = -110, in = 110] (0, 1);
			
			\draw[blue, very thick] (-4,3) to[out = -70, in = 70] (-4, 1);
			\draw[blue, very thick, dashed] (-4,3) to[out = -110, in = 110] (-4, 1);
			
			\draw[red, very thick] (2,3) to[out = -80, in = 80] (2, -3);
			\draw[red, very thick] (-2,3) to[out = -80, in = 80] (-2, -3);
			
			\node[red, scale = 2] at (2.7, 2) {$\delta$};
			\node[red, scale = 2] at (2.7-4, 2) {$\gamma$};
			
			\node[blue, scale = 2] at (4.75, 2) {$\alpha_3$};
			\node[blue, scale = 2] at (4.75-4, 2) {$\alpha_2$};
			\node[blue, scale = 2] at (4.75-8, 2) {$\alpha_1$};
			
			\node[scale = 2] at (4.35, 0) {$\beta_3$};
			\node[scale = 2] at (4.35-4, 0) {$\beta_2$};
			\node[scale = 2] at (4.35-8, 0) {$\beta_1$};
			
			\node[scale = 3] at (3.5, -2.3) {$X_3$};
			\node[scale = 3] at (-3.5, -2.3) {$X_1$};
			\node[scale = 3] at (0, -2.3) {$X_2$};
			
			\draw[very thick, dashed] (8, 0) to (10, 0);
			\draw[->, thick] (10, 2) to[out = -10, in = 10] (10, -2);
			\node[scale = 3] at (12, 0) {$s$};

			\end{tikzpicture}} \end{center}
	\caption{The surface $\S_3$, the involution $s$, and the curves $\alpha_i$ and $\beta_i$.}
	\label{S}
\end{figure}

We denote by $T_{\gamma}$ the left Dehn twist about a curve $\gamma$.
We say that $(\gamma, \delta)$ is a \textit{separating pair}, if $\gamma$ and $\delta$ are disjoint nonisotopic separating cures on $\S_3$. We say that a separating pair $(\gamma, \delta)$ is \textit{symmetric}, if the curves $\gamma$ and $\delta$ are $s$-invariant. 
An example of such pair is shown in Fig \ref{S}. If $(\gamma, \delta)$ is a symmetric separating pair, then the Dehn twists $T_\gamma$ and $T_\delta$ belong to $\SI_3$ and commute with each other. Hence one can consider the correspondent abelian cycle $\A(T_\gamma, T_\delta) \in \H_2(\SI_3, \Z)$. Such abelian cycles will be called \textit{simple}. 
One can deduce from the results of Brendle and Farb \cite{BrendleFarb} or Gaifullin \cite{Gaifullin_J} that each simple abelian cycle is nonzero and has the infinite order, see Proposition \ref{nonzero}. The main goal of the present paper is to study the relations between simple abelian cycles in the group $\H_2(\SI_3, \Z)$.

First, we should describe explicitly the set of all simple abelian cycles.
Let $V_1, V_2, V_3 \subset \H = \H_1(\S_3, \Z)$ be subgroups of rank 2. We say that $(V_1, V_2, V_3)$ is an \textit{orthogonal splitting of} $\H$, if $\H = V_1 \oplus V_2 \oplus V_3$ and the subgroups $V_1, V_2, V_3$ are pairwise orthogonal w.r.t the intersection form. Let $(\gamma, \delta)$ be a separating pair. Denote by $X_1$, $X_2$, $X_3$ the connected components of $\S_3 \setminus \{\gamma, \delta\}$ as shown in Fig. \ref{S}. Denote $\H_{X_i} = \H_1(X_i, \Z) \subset \H$ for $i = 1, 2, 3$. Then $(\H_{X_1}, \H_{X_2}, \H_{X_3})$ is an orthogonal splitting of $\H$; we say that this splitting \textit{corresponds} to the separating pair $(\gamma, \delta)$. We say that an orthogonal splitting is \textit{symmetric}, if it corresponds to some symmetric separating pair $(\gamma, \delta)$. In Section \ref{Sec3}, we prove the following result.

\begin{prop} \label{inj}
	Let $(\gamma, \delta)$ and $(\gamma', \delta')$ be two symmetric separating pairs, such that the corresponding orthogonal splittings of $\H$ coincide. Then $(\gamma, \delta)$ and $(\gamma', \delta')$ are $\SI_3$-equivalent.
\end{prop}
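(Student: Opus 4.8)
The plan is to realize the $\SI_3$-equivalence in two stages: first build a symmetric homeomorphism taking one pair to the other, and then correct its homological action by symmetric maps supported in the complementary pieces, without moving the curves. Throughout, write $V_i = \H_{X_i} = \H_{X'_i}$, where the coincidence of the two splittings is used; recall that each $X_i$ has genus one (the outer pieces $X_1, X_3$ with one boundary curve, the middle piece $X_2$ with two), so each $V_i$ has rank $2$ and carries a nondegenerate restriction of the intersection form.

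First I would invoke the symmetric change of coordinates principle to produce an orientation-preserving homeomorphism $h$ commuting with $s$ that carries $\gamma \mapsto \gamma'$ and $\delta \mapsto \delta'$, and hence each piece $X_i$ to $X'_i$. The middle piece is intrinsically distinguished as the unique component meeting both curves, so $h$ automatically sends $X_2$ to $X'_2$; the remaining freedom in the change of coordinates is used to match the two outer pieces in the order dictated by the coincidence of the splittings. Because the splittings coincide we then have $h_\ast(V_i) = \H_{X'_i} = V_i$ for every $i$, so $h_\ast$ is block diagonal with respect to $\H = V_1 \oplus V_2 \oplus V_3$, and since $h_\ast$ preserves the intersection form its restriction $h_\ast|_{V_i}$ lies in $\SL(V_i) \cong \SL(2, \Z)$.

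It remains to kill $h_\ast$ one block at a time. In the handle of each piece $X'_i$ choose $s$-invariant simple closed curves $a_i, b_i$ meeting once (for instance the curves $\alpha_i, \beta_i$ of Figure \ref{S}); since $s$ is orientation-preserving and fixes these curves setwise, the Dehn twists $T_{a_i}, T_{b_i}$ commute with $s$, are supported in the interior of $X'_i$, fix $\gamma'$ and $\delta'$, and their homological actions generate $\SL(V_i)$. Hence there is a symmetric mapping class $\phi_i$ supported in $X'_i$ with $(\phi_i)_\ast|_{V_i} = (h_\ast|_{V_i})^{-1}$ and acting trivially on $V_j$ for $j \neq i$. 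Setting $\phi = \phi_1 \phi_2 \phi_3$ (the factors commute, having disjoint supports), the composite $f = \phi h$ commutes with $s$, satisfies $f(\gamma) = \gamma'$ and $f(\delta) = \delta'$, and has $f_\ast = \phi_\ast h_\ast = \id$ on each block and therefore on all of $\H$. Thus $f \in \SI_3$, which is exactly the asserted $\SI_3$-equivalence.

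The step I expect to be the main obstacle is the very first one: guaranteeing that the change of coordinates can be chosen $s$-equivariantly and with the pieces matched in the correct order. This needs the symmetric version of the change of coordinates principle for separating pairs on $\Sigma_3$, together with the verification that the stabilizer of the standard configuration inside the centralizer of $s$ in $\Mod(\Sigma_3)$ acts transitively enough to realize the interchange of the two outer pieces; everything after that is the routine block-diagonal bookkeeping sketched above.
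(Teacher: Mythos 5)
Your overall strategy is the same as the paper's (symmetric change of coordinates followed by block-diagonal corrections supported in the three pieces), but there is a genuine gap at the middle block. Your key claim --- that in each piece $X'_i$ one can choose $s$-invariant simple closed curves realizing a symplectic basis of $V_i$, so that the resulting symmetric twists generate all of $\SL(V_i)$ --- fails for $i=2$. The middle piece is a twice-holed torus on which $s$ has only two fixed points, and a primitive class $w\in V_2$ is realizable by an $s$-invariant curve in $X_2$ (disjoint from the boundary) only when $\omega_0(w)=1$; the class $a_2$, with $\omega_0(a_2)=0$, is not so realizable. In particular $T_{\alpha_2}$ does not lie in $\SMod(\S_3)$, and the symmetric mapping classes supported in $X_2$ that fix $\gamma'$ and $\delta'$ act on $V_2$ only through the proper subgroup $G=\left\langle R_1^2, R_2\right\rangle\subset\SL(2,\Z)$ (generated homologically by $T_{\alpha_2}T_{s(\alpha_2)}$ and $T_{\beta_2}$), which is the stabilizer of $\omega_0|_{V_2}$ and has index $3$. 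So you cannot kill an arbitrary element of $\SL(V_2)$ by a symmetric correction supported in $X'_2$, and the step ``there is a symmetric $\phi_2$ with $(\phi_2)_*|_{V_2}=(h_*|_{V_2})^{-1}$'' is unjustified as stated.

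The gap is repairable, and the repair is exactly the extra content of the paper's proof. Since $h$ commutes with $s$, its action on $\H$ preserves the quadratic form $\omega_0$, so the basis $(h_*^{-1}(a_2), h_*^{-1}(b_2))$ of $V_2$ still satisfies $\omega_0=0$ and $\omega_0=1$ respectively; one must then show that any such basis can be carried to the standard one by an element of $G$, not merely of $\SL(2,\Z)$. The paper proves this by a ``refined Euclidean algorithm'' (Lemma \ref{Euc2}), which is the one nontrivial lemma your argument is missing. Your treatment of the outer blocks $V_1, V_3$ and of the initial symmetric change of coordinates (the paper's Lemma \ref{orbit1}, via the Birman--Hilden sequence and the classification of the quotient curves on the $8$-punctured sphere) matches the paper and is fine.
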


Let $(V_1, V_2, V_3)$ be a symmetric orthogonal splitting of $\H$. Let $(\gamma, \delta)$ be a symmetric separating pair, such that $(V_1, V_2, V_3)$ corresponds to $(\gamma, \delta)$. Proposition \ref{inj} implies that all such symmetric separating pairs are $\SI_3$-equivalent, so the simple abelian cycle $\A_{V_1, V_2, V_3} = \A(T_\gamma, T_\delta) \in \H_2(\SI_3, \Z)$ does not depend on the choice of the curves $\gamma$ and $\delta$, and hence is well-defined. Obviously, any simple abelian cycle has such form for some symmetric orthogonal splitting $(V_1, V_2, V_3)$. We also have the relations $\A_{V_1, V_2, V_3} = - \A_{V_3, V_2, V_1}$.

The next question is which orthogonal splittings of $\H$ are symmetric. In order to answer it, we need to recall the definition of the Arf-invariant of an Sp-quadratic form.
Consider the group $W \cong \Z^{2n}$ with a non-degenerate symplectic form $J(x, y) = x \cdot y$ with determinant $\pm1$.
Recall that an \textit{Sp-quadratic form} on $W$ is a map $\omega: W \to \Z / 2\Z$ satisfying 
$$\omega(x\pm y) = \omega(x) + \omega(y) + (x \cdot y \mod 2).$$
The \textit{Arf-invariant} of $\omega$ is 
$$\Arf(\omega) = \sum_{i=1}^n \omega(x_i)\omega(y_i) \in \Z / 2\Z,$$
where $\{x_1, y_1, \dots, x_n, y_n\}$ is a symplectic basis of $W$. The Arf-invariant is well-defined, i.e. does not depend of the choice of the symplectic basis. Note that classically Sp-quadratic forms are defined on vector spaces over $\Z / 2\Z$. Following this approach, we say that an Sp-quadratic form on $W$ is the same as an Sp-quadratic form on $W$ reduced modulo $2$.

Let us identify $\S_3$ and $s$ with the surface and the involution shown in Fig. \ref{S}. We also fix the curves $\alpha_i$ and $\beta_i$ for $i = 1, 2, 3$, shown in Fig \ref{S}. The homology classes $\{a_1, b_1, a_2, b_2, a_3, b_3\}$, where $a_i = [\alpha_i]$ and $b_i = [\beta_i]$, form a symplectic basis of $\H$.
Let us fix the Sp-quadratic form $\omega_0$ on $\H$ defined by
\begin{equation*}
\omega_0(a_1) = \omega_0(a_3) = \omega_0(b_1) = \omega_0(b_2) = \omega_0(b_3) = 1, \; \; \; \; \omega_0(a_2) = 0.
\end{equation*}
We will prove the following result.

\begin{prop} \label{image}	
	 Let $(V_1, V_2, V_3)$ be an orthogonal splitting of $\H$. Then this splitting is symmetric if and only if the following condition hold.
	 \begin{equation} \label{cond}
	 \Arf(\omega_0|_{V_1}) = \Arf(\omega_0|_{V_3}) = 1, \; \; \; \;\Arf(\omega_0|_{V_2}) = 0.
	 \end{equation}
\end{prop}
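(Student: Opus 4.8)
My plan is to pass to the branched double cover description of the hyperelliptic surface and read off both directions from the combinatorics of the branch points. Realize $\S_3$ as the double cover $\pi \colon \S_3 \to S^2$ branched over the eight Weierstrass points $B = \{b_1, \dots, b_8\}$ (the fixed points of $s$), so that $s$ is the deck transformation. An $s$-invariant simple closed curve that avoids $B$ projects to a simple closed curve $\bar c \subset S^2 \setminus B$, and $\pi^{-1}(\bar c)$ is connected exactly when $\bar c$ encloses an odd number of points of $B$; in that case $\pi^{-1}(\bar c)$ is an $s$-invariant separating curve. First I would show that if $(\gamma, \delta)$ is a symmetric separating pair whose three complementary pieces $X_1, X_2, X_3$ all have rank-$2$ first homology (i.e. genus $1$), then the three regions of $S^2 \setminus \{\bar\gamma, \bar\delta\}$ contain $3$, $2$, $3$ points of $B$ respectively, with $X_2$ the middle (annular) piece. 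This is forced by Riemann--Hurwitz together with the connectedness of $\gamma$ and $\delta$.

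Next I would compute the restricted Arf invariants from this branch-point data. Under the standard identification $\H_1(\S_3; \Z/2) \cong \{\,T \subseteq B : |T| \text{ even}\,\}/\langle B\rangle$, with intersection form $|T \cap T'| \bmod 2$, the summand $\H_{X_i}$ is the image of $\H_1(X_i;\Z)$ and admits an explicit symplectic basis read off from the branch points in the $i$-th region, while $\omega_0$ is the restriction to $\S_3$ of the distinguished ($s$-invariant) theta characteristic. A direct calculation then shows that $\Arf(\omega_0|_{\H_{X_i}})$ depends only on $m_i$, equalling $1$ when $m_i = 3$ and $0$ when $m_i = 2$. Combined with the distribution $(3,2,3)$ this yields $(\ref{cond})$, after checking the normalization against the reference splitting $(\langle a_1, b_1\rangle, \langle a_2, b_2\rangle, \langle a_3, b_3\rangle)$ of Figure \ref{S}, where $\Arf(\omega_0|_{\langle a_i, b_i\rangle}) = \omega_0(a_i)\omega_0(b_i)$ gives exactly $(1,0,1)$. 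This proves that every symmetric splitting satisfies $(\ref{cond})$.

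For the converse I would argue that any orthogonal splitting satisfying $(\ref{cond})$ lies in the same orbit as the reference splitting under the image of the symmetric mapping class group $\SMod_3$ in $\Sp(6, \Z)$. The key input is that this image is the stabilizer $\Stab(\omega_0)$ of $\omega_0$ (equivalently, modulo $2$ it is the copy of $S_8$ permuting the branch points), so an element of $\Stab(\omega_0)$ carrying the reference splitting to a given one lifts to a mapping class in $\SMod_3$ carrying the reference symmetric pair to a symmetric separating pair realizing that splitting. Since $\Arf(\omega_0|_{V_i})$ is manifestly invariant under $\Stab(\omega_0)$, the condition $(\ref{cond})$ is an invariant of the orbit; what remains is to prove that it is a \emph{complete} invariant, i.e. that $\Stab(\omega_0)$ acts transitively on orthogonal splittings with Arf data $(1,0,1)$.

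This last transitivity statement is where I expect the main difficulty to lie. It is an integral Witt-type extension problem: one must show that a rank-$2$ orthogonal direct summand of $\H$ on which $\omega_0$ has a prescribed Arf invariant can be moved to any other such summand by an automorphism preserving both the symplectic form and $\omega_0$ over $\Z$ (not merely over $\Z/2$), and then iterate on the orthogonal complement to position $V_2$ and $V_3$. The mod-$2$ transitivity follows from Witt's theorem for $\Sp(6, \Z/2)$ together with the $S_8$-description, but lifting it to $\Z$ requires controlling the stabilizer of the first summand inside $\Stab(\omega_0)$ and exploiting the transitivity of $\Sp$ on suitable hyperbolic pairs; assembling these integral steps, rather than the Arf computation itself, is the technical heart of the argument.
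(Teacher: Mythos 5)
Your overall strategy for the ``only if'' direction (branched cover over $S^2$, mod--$2$ homology as even subsets of the eight branch points, the $(3,2,3)$ distribution forced by Riemann--Hurwitz, and a region-by-region Arf computation against the distinguished theta characteristic) is a legitimate alternative to the paper's argument, which instead reads the condition (\ref{cond}) directly off Gaifullin's formula (\ref{BCHI}) for the value of a Birman--Craggs homomorphism on $s$. Your reduction of the converse to the statement that $\rho(\SMod(\S_3))=\Stab_{\Sp(6,\Z)}(\omega_0)$ acts transitively on ordered orthogonal splittings with Arf data $(1,0,1)$ is also sound in outline, and the identification of that stabilizer (index $36$, matching Proposition \ref{fact_2} and the $S_8$ picture) is correct.

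The genuine gap is that you never prove the transitivity statement: you explicitly defer it as ``the technical heart of the argument,'' so the converse direction is a plan rather than a proof. The gap is fillable, and more easily than you suggest: the problem decomposes into the three rank-$2$ blocks, where it becomes the assertion that $\SL(2,\Z)$ acts transitively on symplectic bases of $\Z^2$ with prescribed values of a rank-$2$ quadratic form of given Arf invariant; this is exactly the content of the paper's Lemmas \ref{Euc1} and \ref{Euc2} (the ``refined Euclidean algorithm''), and a block-diagonal element built from these automatically preserves $\omega_0$ globally because two Sp-quadratic forms with the same polarization that agree on a basis coincide. You should also justify the containment $\rho(\SMod(\S_3))\subseteq\Stab(\omega_0)$ (it follows from the uniqueness in Proposition \ref{fact_1} and the equivariance $\rho_{\phi\cdot\omega}(\phi s\phi^{-1})=\rho_\omega(s)$). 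Note that the paper's converse takes a different route that avoids transitivity on splittings altogether: it starts from an arbitrary, possibly non-symmetric, separating pair realizing $(V_1,V_2,V_3)$, produces an auxiliary hyperelliptic involution $s'$ preserving the two curves, corrects it by explicit Dehn twists supported in the middle piece so that $\rho_{\omega_0}(s'')=1$, and then invokes the classification of $\I_3$-orbits of hyperelliptic involutions (Corollary \ref{fact}) to conjugate $s''$ to $s$ by a Torelli element, which does not change the splitting. Either route works, but as written yours is incomplete at its central step.
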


Propositions \ref{inj} and \ref{image} imply that any simple abelian cycle has a form $\A_{V_1, V_2, V_3}$, where $(V_1, V_2, V_3)$ is an orthogonal splitting of $\H$, satisfying the condition \ref{cond}.
The main result of the present paper is as follows.

\begin{theorem} \label{mainth}
The simple abelian cycles $\A_{V_1, V_2, V_3} \in \H_2(\SI_3, \Z)$, where $(V_1, V_2, V_3)$ runs over the set of all orthogonal splittings of $\H$ satisfying (\ref{cond}), are linearly independent over $\Z$ after taking the quotient by the relations $\A_{V_1, V_2, V_3} = -\A_{V_3, V_1, V_2}$.
\end{theorem}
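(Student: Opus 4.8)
The plan is to detect the simple abelian cycles by pairing them against explicit two-dimensional cohomology classes built from the Birman--Craggs homomorphisms, in the spirit of the abelian-cycle computations of Brendle--Farb \cite{BrendleFarb} and Gaifullin \cite{Gaifullin_J}. Recall that every $\Z/2\Z$-valued Sp-quadratic form (spin structure) $q$ on $\H$ determines a Birman--Craggs homomorphism $\rho_q \colon \I_3 \to \Z/2\Z$; restricting it along the inclusion $\SI_3 \subseteq \I_3$ gives a class $\rho_q \in \H^1(\SI_3, \Z/2\Z) = \mathrm{Hom}(\SI_3, \Z/2\Z)$. The key computational input, which I would record first, is Johnson's formula for the value on a separating twist: if $c$ is a symmetric separating curve bounding the genus-one piece $X$ with $\H_X = W$, then $\rho_q(T_c) = \Arf(q|_W)$. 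In particular, for a symmetric separating pair $(\gamma, \delta)$ with associated splitting $(V_1, V_2, V_3)$ one has $\rho_q(T_\gamma) = \Arf(q|_{V_1})$ and $\rho_q(T_\delta) = \Arf(q|_{V_3})$, since $\gamma$ bounds the genus-one piece with homology $V_1$ and $\delta$ bounds the one with homology $V_3$.

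Next I would use the standard fact that a degree-one cup product evaluates on an abelian cycle as a determinant. For $u, v \in \H^1(\SI_3, \Z/2\Z)$ and commuting $x, y \in \SI_3$,
\begin{equation*}
\langle u \cup v, \; \overline{\A}(x, y)\rangle = u(x)\,v(y) + u(y)\,v(x) \in \Z/2\Z,
\end{equation*}
where $\overline{\A}$ denotes the mod $2$ reduction of the abelian cycle. Applying this with $u = \rho_q$, $v = \rho_{q'}$ and $(x,y) = (T_\gamma, T_\delta)$ yields the explicit pairing
\begin{equation*}
\langle \rho_q \cup \rho_{q'}, \; \overline{\A}_{V_1, V_2, V_3}\rangle = \Arf(q|_{V_1})\,\Arf(q'|_{V_3}) + \Arf(q|_{V_3})\,\Arf(q'|_{V_1}).
\end{equation*}
Observe that this expression is symmetric under $V_1 \leftrightarrow V_3$, which is exactly the reduction mod $2$ of the relation $\A_{V_1, V_2, V_3} = -\A_{V_3, V_2, V_1}$; thus the mod $2$ pairing sees precisely the reversal-class of a splitting, matching the quotient appearing in Theorem \ref{mainth}.

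With these formulas in hand, proving $\Z/2\Z$-linear independence of the classes $\overline{\A}_{V_1, V_2, V_3}$ reduces to a purely algebraic statement: the matrix whose columns are indexed by the orthogonal splittings satisfying (\ref{cond}) (taken up to reversal) and whose rows are indexed by pairs $(q, q')$ of Sp-quadratic forms, with entries given by the displayed formula, has full column rank over $\Z/2\Z$. I would prove this by analysing the map $W \mapsto f_W$, where $f_W$ is the Boolean function $q \mapsto \Arf(q|_W)$ on the finite set of Sp-quadratic forms. Two steps are needed: first, that $f_W$ is a quadratic Boolean polynomial in the coordinates of $q$ which determines the rank-two symplectic summand $W$; and second, that the symmetrized products $f_{V_1}\otimes f_{V_3} + f_{V_3}\otimes f_{V_1}$, as $(V_1, V_2, V_3)$ ranges over admissible splittings modulo reversal, are linearly independent. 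I expect this to follow from a triangularity argument: order the splittings by a suitable numerical invariant and exhibit, for each admissible splitting, a pair $(q, q')$ detecting it while annihilating all earlier ones, so that the pairing matrix is (block-)triangular with nonzero diagonal.

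The main obstacle is this last independence statement for the Arf functionals: one must control exactly how much of the summand $W$ is remembered by the Boolean function $q \mapsto \Arf(q|_W)$, and verify that no unexpected linear relations occur among the symmetrized products over the admissible set, which is where the precise interplay with the Arf condition (\ref{cond}) and the reference form $\omega_0$ enters. A secondary, but genuine, technical point is the passage from $\Z/2\Z$ to $\Z$ required by the statement. Since the detection above is intrinsically mod $2$, I would upgrade it either by showing that the subgroup of $\H_2(\SI_3, \Z)$ generated by the simple abelian cycles is torsion-free --- in which case $\Z/2\Z$-independence of the reductions forces $\Z$-independence of the cycles --- or by replacing the cup products $\rho_q \cup \rho_{q'}$ with an integral lift (for instance a Pontryagin square or a Bockstein) that detects the cycles already over $\Z$. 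Combined with the bijection between simple abelian cycles and admissible splittings furnished by Propositions \ref{inj} and \ref{image}, this yields the theorem.
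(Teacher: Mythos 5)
Your route---pairing the cycles against cup products of Birman--Craggs homomorphisms---is genuinely different from the paper's, but it cannot prove the theorem, for a reason more basic than the two difficulties you flag at the end. A Birman--Craggs homomorphism $\rho_q$ depends only on the Sp-quadratic form $q$ on $\H\otimes\Z/2\Z$, and your detecting quantity $\langle \rho_q\cup\rho_{q'},\overline{\A}_{V_1,V_2,V_3}\rangle=\Arf(q|_{V_1})\Arf(q'|_{V_3})+\Arf(q|_{V_3})\Arf(q'|_{V_1})$ therefore depends only on the images of $V_1,V_2,V_3$ in $\H\otimes\Z/2\Z$. There are only finitely many forms $q$ (and only $36$ with $\Arf(q)=0$, which is the class for which Birman--Craggs homomorphisms are defined), hence only finitely many rows in your pairing matrix; but the set of orthogonal splittings satisfying (\ref{cond}) is infinite, and infinitely many of them share the same mod $2$ reduction (act on a fixed splitting by the level-$2$ congruence subgroup of $\Sp(6,\Z)$, which preserves (\ref{cond}) and does not preserve the splitting). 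So the matrix has infinitely many identical columns and cannot have full column rank: your functionals cannot separate the simple abelian cycles, no matter how the triangularity argument is organized. This is precisely why the paper does not attempt a purely cohomological detection of the whole family. (The secondary issues you raise are also real: mod $2$ independence of reductions gives only that all coefficients in an integral relation are even, and does not descend further without torsion-freeness; and the independence of the symmetrized Arf functionals even on the finite mod $2$ level is asserted, not proved.)

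The paper's argument instead separates the splittings by integral data. It chooses (via Gaifullin's Lemma 4.5) a primitive class $x\in\H$ whose components in the various splittings are nonzero and pairwise distinguish the splittings, realizes these components by a symmetric multicurve $M^i$ for each $i$, and invokes the spectral sequence for the action of $\SI_3$ on the contractible complex of symmetric cycles $\SB_3(x)$: the dimension inequality $\dim(\sigma)+\cd(\Stab(\sigma))\le 2$ forces $E^1_{0,2}=E^\infty_{0,2}$, so $\bigoplus_i \H_2(\Stab_{\SI_3}(M^i),\Z)$ injects into $\H_2(\SI_3,\Z)$ with the cycles $\A(T_{\gamma^i},T_{\delta^i})$ landing in distinct summands. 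Only the nonvanishing of each individual cycle is then done by a pairing argument, and even there the paper uses the integral Morita (Casson-invariant) homomorphisms rather than the mod $2$ Birman--Craggs ones. If you want to keep a detection-by-cohomology flavor, you would at minimum have to replace $\rho_q\cup\rho_{q'}$ by an infinite integral family such as products $\lambda_{\sigma\circ\phi}\cdot\lambda_{\sigma\circ\phi'}$ of Morita homomorphisms, and then face an independence problem for those; the paper's spectral-sequence mechanism is what makes that global separation unnecessary.
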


Denote by $\A_3 \subseteq \H_2(\SI_3, \Z)$ the subgroup generated by all simple abelian cycles. The following question is very natural.

\begin{problem}
	Is the inclusion $\A_3 \subseteq \H_2(\SI_3, \Z)$ strict? In other words, does there exist a homology class in $\H_2(\SI_3, \Z)$, which is not a linear combination of simple abelian cycles?
\end{problem}

\subsection{History} The groups $\Mod(\S_g)$, $\I_g$ and $\SI_g$ also arise in algebraic geometry in the following way. Assume that $g \geq 2$. Recall that the \textit{Teichmuller space} $\TT_g$ is the moduli space of complex structures (or, equivalently, hyperbolic metrics) on $\S_g$. This space is contractible, moreover, we have a homeomorphism $\TT_g \cong \R^{6g-6}$. The natural action of $\Mod(\S_g)$ on $\TT_g$ has finite stabilizers, and the quotient space $\M_g = \TT_g / \Mod(\S_g)$ is the \textit{moduli space} of smooth complex curves. From the topological point of view, $\M_g$ is a complex orbifold.
The action of $\I_g$ on the Teichmuller space is free, so the quotient $\T_g = \TT_g / \I_g$ is a maifold; it called the \textit{Torelli space}. It is the moduli space of smooth complex curves with homology framing, i.e. with a fixed symplectic basis in $\H$. The Torelli space is an Eilenberg-MacLane space for the Torelli group: we have $\H^*(\I_g, \Z) \cong \H^*(\T_g, \Z)$. These cohomology groups can also be interpreted as the set of characteristic classes of homologically trivial surface bundles. Denote $\h_g$ the Siegel upper half space.
Recall that there is the well-defined \textit{period map} $\T_g \to \h_g$, which is 2-fold branched cover onto its image in the case $g \geq 3$ (if $g=2$, then the period map is an embedding). For $g \geq 3$, the branch locus of the period map is a subspace of $\T_g$ consisting of all points, projecting to the hyperelliptic locus via the natural map $\T_g \to \M_g$. The branch locus is not connected. However, its components are pairwise diffeomorphic. It turns out that each of these components is an Eilenberg-MacLane space for the hyperelliptic Torelli group \cite{Kordek}.

A natural problem is to study the homology of $\I_g$, $\K_g$ and $\SI_g$. The first homology of the Torelli group was computed explicitly by Johnson \cite{Johnson3}. Bestvina, Bux and Margalit \cite{Bestvina} in 2007 constructed the contractible complex of cycles $\B_{g}$, on which the Torelli group $\I_g$ acts cellularly. Using the spectral sequence associated with this action, they showed, that the group $\I_g$ has cohomological dimension $3g-5$ and that the top homology group $\H_{3g-5}(\I_g, \Z)$ is not finitely generated. Gaifullin \cite{Gaifullin_T} in 2019 proved that for $2g-3 \leq k \leq 3g-5$ the homology group $\H_{k}(\I_g, \Z)$ contains a free abelian subgroup of an infinite rank. In 2023, Minahan \cite{Minahan} proved that for $g \geq 51$, the vector space $\H_2(\I_g; \Q)$ has finite dimension. In 2025, Minahan and Putman \cite{Minahan} extended this result for $g \geq 5$ and explicitly computed the group $\H_2(\I_g; \Q)$ for $g \geq 6$.

There are also some results on the homology of the Torelli group in the case $g=3$. Applying stratified Morse theory to the image of the period map $\T_3 \to \h_3$, Hain \cite{Hain} computed the invariant part of $\H_*(\I_3, \Z[1/2])$ w.r.t. the action of the  hyperelliptic involution. 
Gaifullin in \cite{Gaifullin_T3} obtained a partial result towards the conjecture that $\H_2(\I_3, \Z)$ is not finitely generated.
The author \cite{Spiridonov_T3} computed explicitly the group $\H_4(\I_3, \Z)$.

For the Johnson kernel, Bestvina, Bux, and Margalit~\cite{Bestvina} showed that the cohomological dimension of $\K_g$ is equal to $2g-3$, and Gaifullin \cite{Gaifullin_J} proved that the top homology group $\H_{2g-3}(\K_g,\Z)$ is not finitely generated. The author \cite{Spiridonov_Kg} computed explicitly the subgroup of $\H_{2g-3}(\K_g, \Z)$ generated by simple abelian cycles. Concerning the first homology, initially Dimca and Papadima  \cite{DiPa13} showed that $\H_1(\K_g, \Q)$ is finite-dimensional for $g \geq 4$. Later, from the works of Ershov and He \cite{ErHe18}, and Church, Ershov and Putman \cite{CEP22}, it follows that for $g \geq 4$ the groups $\K_g$ and $\H_1(\K_g, \Z)$ are finitely generated. Finally, Gaifullin \cite{Gaifullin_K3} proved that $\H_1(\K_3, \Z)$ is also finitely generated.

Much less in known about $\H_*(\SI_g, \Z)$. In order to study these homology groups, Brendle, Childers, and Margalit \cite{Brendle} in 2013 introduced another CW-complex $\SB_g$ called the \textit{complex of symmetric cycles}, on which $\SI_g$ acts cellularly. Using the similar ideas to \cite{Bestvina}, they proved that $\cd(\SI_g) = g-1$ and showed that the top homology group $\H_{g-1}(\SI_g, \Z)$ is not finitely generated.

\subsection{Structure of the paper} In Section \ref{Sec2}, we recall some  results on the symmetric mapping class groups and Birman-Craggs homomorphisms. Section \ref{Sec3} contain proofs of Propositions \ref{inj} and \ref{image}. In Section \ref{Sec4}, we prove Theorem \ref{mainth}; the main approach is the spectral sequence associated with the action of $\SI_3$ on $\SB_3$.

\subsection{Acknowledgements}
The author would like to thank his scientific advisor A.~Gaifullin for useful discussions and constant attention to this work.

\section{Preliminaries on symmetric mapping class groups}\label{Sec2}

\subsection{Birman-Hilden theorem}
Let $\SHomeo^+(\S_g)$ be the group of orientation-preserving homeomorphisms of $\S_g$ commuting with the hyperelliptic involution $s$. Note that $s$ has $2g+2$ fixed points on $\S_g$, so the quotient space $\S_g / s$ is a sphere with $2g+2$ cone points. Since the set of cone points is invariant w.r.t. every element of $\SHomeo^+(\S_g)$, we have a short exact sequence
\begin{equation*} \label{Homeoex}
1 \rightarrow \left\langle s \right\rangle  \rightarrow \SHomeo^+(\S_g) \rightarrow \Homeo^+(\S_{0, 2g+2}) \rightarrow 1.
\end{equation*}
Let us define the \textit{symmetric mapping class group} $\SMod(\S_g)$ as the group of isotopy classes of $\SHomeo^+(\S_g)$ (we do not require isotopies to be $s$-invariant). The Birman-Hilden Theorem \cite[Theorem 7]{BH11} says that there is a short exact sequence
\begin{equation} \label{Smodex}
1 \rightarrow \left\langle s \right\rangle  \rightarrow \SMod(\S_g) \rightarrow \Mod(\S_{0, 2g+2}) \rightarrow 1,
\end{equation}
where by $\Mod(\S_{0, n})$ we denote the mapping class group of the sphere with $n$ punctures.
In particular, this implies that if two elements of $\SHomeo^+(\S_g)$ are isotopic, then they are isotopic via an $s$-invariant isotopy. Obviously we have $\SI_g = \I_g \cap \SMod(\S_g)$.

We also need to describe the image of $\SMod(\S_g)$ under the symplectic representation $\rho: \Mod(\S_g) \to \Sp(2g, \Z)$. Consider the symmetric group $S_{2g+2}$ which can be realised as the permutation group of $2g + 2$ punctures on $\S_{0, 2g+2}$. Then we have the following result; the author is grateful to Will Sawin \cite{SawinMO} for pointing it out.

\begin{prop}\cite[Theorem 1]{ACampo} \label{fact_2}
	Let $g \geq 3$. Then we have
	$$[\Sp(2g, \Z) : \rho(\SMod(\S_g))] = \dfrac{|\Sp(2g, \Z/2\Z)|}{|S_{2g+2}|} = \dfrac{2^{g^2}\prod_{i=1}^{g}\bigl(2^{2i}-1\bigr)}{(2g+2)!}.$$
\end{prop}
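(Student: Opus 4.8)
The plan is to deduce the index from A'Campo's description of the image together with the surjectivity of reduction modulo $2$ and the classical order formula for finite symplectic groups. Write $\pi \colon \Sp(2g, \Z) \to \Sp(2g, \Z/2\Z)$ for reduction mod $2$; this map is surjective, with kernel the level-$2$ congruence subgroup $\Sp(2g, \Z)[2]$. The key input, A'Campo's Theorem 1, is read as the assertion that $\rho(\SMod(\S_g))$ is exactly a \emph{full preimage} $\pi^{-1}(G)$ for a subgroup $G \leq \Sp(2g, \Z/2\Z)$; equivalently, $\rho(\SMod(\S_g)) \supseteq \Sp(2g, \Z)[2]$. Granting this, the index in question collapses to a finite computation:
$$[\Sp(2g, \Z) : \rho(\SMod(\S_g))] = [\Sp(2g, \Z) : \pi^{-1}(G)] = [\Sp(2g, \Z/2\Z) : G] = \frac{|\Sp(2g, \Z/2\Z)|}{|G|},$$
where the middle equality uses that $\pi$ is surjective (so left cosets correspond under $\pi$). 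It then remains only to identify $G$ and to evaluate the two orders.

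To identify $G = \pi(\rho(\SMod(\S_g)))$, I would pass through the Birman-Hilden sequence \eqref{Smodex}. Since $\rho(s) = -I \equiv I \pmod 2$, the composite $\SMod(\S_g) \xrightarrow{\rho} \Sp(2g, \Z) \xrightarrow{\pi} \Sp(2g, \Z/2\Z)$ kills the central subgroup $\langle s \rangle$, hence factors through a homomorphism $\overline{\rho}\colon \Mod(\S_{0, 2g+2}) \to \Sp(2g, \Z/2\Z)$. The target is the action on $\H_1(\S_g; \Z/2\Z)$, and this $\Z/2\Z$-vector space is classically identified with $W_0 / \langle (1, \dots, 1)\rangle$, where $W_0 \subset (\Z/2\Z)^{2g+2}$ is the even (sum-zero) subspace spanned by the $2g+2$ Weierstrass points; under this identification $\overline{\rho}$ is induced by the permutation action of $S_{2g+2}$ on the branch points. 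Thus $G$ is the image of the standard mod-$2$ representation of $S_{2g+2}$ on a space of dimension $2g$. For $2g + 2 \geq 5$ (in particular for $g \geq 3$) this representation is faithful, since for such $n$ the only proper nontrivial normal subgroup of $S_n$ is $A_n$, which acts nontrivially. Hence $G \cong S_{2g+2}$ and $|G| = (2g+2)!$.

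Finally, the order of the symplectic group over $\Z/2\Z$ is the standard count $|\Sp(2g, \Z/2\Z)| = 2^{g^2} \prod_{i=1}^{g}(2^{2i} - 1)$, obtained by counting ordered symplectic bases of $(\Z/2\Z)^{2g}$. Substituting $|G| = (2g+2)!$ yields
$$[\Sp(2g, \Z) : \rho(\SMod(\S_g))] = \frac{|\Sp(2g, \Z/2\Z)|}{|S_{2g+2}|} = \frac{2^{g^2}\prod_{i=1}^{g}(2^{2i} - 1)}{(2g+2)!},$$
which is the claimed value.

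The main obstacle is the geometric input from A'Campo: that the image is a full preimage under $\pi$, equivalently that $\rho(\SMod(\S_g))$ contains the entire level-$2$ congruence subgroup $\Sp(2g, \Z)[2]$. This is the substantive fact and is precisely what we cite; the remainder is bookkeeping. A secondary point meriting care is the faithfulness of the mod-$2$ permutation representation of $S_{2g+2}$, which fixes the admissible range of $g$: for $g \leq 1$ the representation is not faithful (e.g. for $g=1$ the kernel is the Klein four-group), and small-$g$ exceptional isomorphisms such as $\Sp(4, \Z/2\Z) \cong S_6$ make the formula degenerate, which is why the hypothesis $g \geq 3$ is imposed.
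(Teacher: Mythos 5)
Your derivation is correct, and it matches the paper's treatment in the only sense available: the paper offers no proof of this proposition, stating it as a direct citation of A'Campo's Theorem 1 (with the MathOverflow answer of Sawin as a pointer), and your argument is precisely the standard unpacking of that theorem --- the image $\rho(\SMod(\S_g))$ is the full preimage under reduction mod $2$ of the copy of $S_{2g+2}$ acting on $\H_1(\S_g;\Z/2\Z)\cong W_0/\langle(1,\dots,1)\rangle$, after which the index formula is the quotient of the two group orders. The only small inaccuracy is the closing remark: for $g=2$ the formula does not actually degenerate (it correctly gives index $1$, since $\Sp(4,\Z/2\Z)\cong S_6$ and $\SMod(\S_2)=\Mod(\S_2)$); the hypothesis $g\geq 3$ reflects the range needed elsewhere in the paper rather than a failure of the count.
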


\begin{corollary}\label{fact_2_cor}
	Let $g \geq 3$. Then the number of distinct $\I_g$-orbits of hyperelliptic involutions on $\S_g$ equals to 
	$$\dfrac{2^{g^2}\prod_{i=1}^{g}\bigl(2^{2i}-1\bigr)}{(2g+2)!}.$$
	In particular, for the case $g=3$, we obtain exactly $36$ distinct $\I_3$-orbits.
\end{corollary}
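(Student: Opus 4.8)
The plan is to reduce the count of $\I_g$-orbits of hyperelliptic involutions to the index computed in Proposition \ref{fact_2} by means of a double-coset argument. First I would recall the classical fact that any two hyperelliptic involutions on $\S_g$ are conjugate in $\Mod(\S_g)$: each has quotient a sphere with $2g+2$ cone points, so any two are topologically conjugate, which gives conjugacy in the mapping class group. Thus, writing $\HH$ for the set of all hyperelliptic involutions on $\S_g$, the conjugation action of $\Mod(\S_g)$ on $\HH$ is transitive, and taking $s$ as a basepoint identifies $\HH \cong \Mod(\S_g)/C$ as a $\Mod(\S_g)$-set, where $C = C_{\Mod(\S_g)}(s)$ is the centralizer of $s$ and $\Mod(\S_g)$ acts by left translation.

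The second step is to identify $C$ with $\SMod(\S_g)$. The inclusion $\SMod(\S_g) \subseteq C$ is immediate. For the reverse, given $f \in C$ with representative $F$, the involution $FsF^{-1}$ is isotopic to $s$; since isotopic involutions are conjugate by a homeomorphism isotopic to the identity (a standard consequence of the Birman--Hilden theory, cf. the uniqueness of $s$-invariant isotopies recorded after (\ref{Smodex})), one may replace $F$ within its isotopy class by a homeomorphism strictly commuting with $s$, whence $f \in \SMod(\S_g)$. This identification $C = \SMod(\S_g)$ is the one genuinely conceptual input, and the step I expect to require the most care; the remainder of the argument is formal.

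With $C = \SMod(\S_g)$, the $\I_g$-orbits on $\HH \cong \Mod(\S_g)/\SMod(\S_g)$ are precisely the double cosets $\I_g \backslash \Mod(\S_g) / \SMod(\S_g)$. Since $\I_g$ is normal in $\Mod(\S_g)$ with quotient $\Sp(2g,\Z)$ realized by $\rho$, the homomorphism $\rho$ kills $\I_g$ and collapses $\SMod(\S_g)$ onto $\rho(\SMod(\S_g))$, so it carries this set of double cosets bijectively onto the coset space $\Sp(2g,\Z)/\rho(\SMod(\S_g))$. Hence the number of $\I_g$-orbits of hyperelliptic involutions equals the index $[\Sp(2g,\Z) : \rho(\SMod(\S_g))]$.

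Finally I would invoke Proposition \ref{fact_2} to evaluate this index as $\dfrac{2^{g^2}\prod_{i=1}^{g}\bigl(2^{2i}-1\bigr)}{(2g+2)!}$, which is the asserted general formula. For $g=3$ this specializes to $\dfrac{2^{9}\cdot 3 \cdot 15 \cdot 63}{8!} = \dfrac{1451520}{40320} = 36$, giving exactly $36$ distinct $\I_3$-orbits, as claimed.
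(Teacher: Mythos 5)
Your proposal is correct and follows essentially the same route as the paper: transitivity of the conjugation action identifies the set of hyperelliptic involutions with $\Mod(\S_g)/\SMod(\S_g)$, the $\I_g$-orbits become double cosets, normality of $\I_g$ reduces the count to $[\Sp(2g,\Z):\rho(\SMod(\S_g))]$, and Proposition \ref{fact_2} finishes. The only difference is that you spell out via Birman--Hilden why the centralizer of $s$ equals $\SMod(\S_g)$, a point the paper takes as immediate.
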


\begin{proof}
	Fix a hyperelliptic involution $s\in \Mod(\S_g)$. Every hyperelliptic involution on $\S_g$ is conjugate to $s$, hence the set $\mathcal H_g$ of hyperelliptic involutions is a single $\Mod(\S_g)$--conjugacy class. The stabiliser of $s$ under conjugation is its centraliser, hence
	$$\Stab_{\Mod(\S_g)}(s)=\SMod(\S_g),$$
	so we have a $\Mod(\S_g)$--equivariant identification
	$$\mathcal H_g \cong \Mod(\S_g)/\SMod(\S_g).$$
	The action of $\I_g$ on $\mathcal H_g$ corresponds to the left action of $\I_g$ on $\Mod(\S_g)/\SMod(\S_g)$, so the set of $\I_g$--orbits is naturally identified with the set of double cosets
	$$\I_g \backslash \Mod(\S_g) / \SMod(\S_g).$$
	Since $\I_g \trianglelefteq \Mod(\S_g)$, this set is in bijection with the coset space $\Mod(\S_g)/(\I_g \SMod(\S_g))$, and therefore the number of $\I_g$--orbits equals
	$$[\Mod(\S_g):\I_g \SMod(\S_g)].$$
	Using $\ker(\rho)=\I_g$, we have $\Mod(\S_g)/\I_g \cong \Sp(2g,\Z)$, and the image of $\I_g \SMod(\S_g)$ in this quotient equals $\rho(\SMod(\S_g))$. Hence
	$$[\Mod(\S_g):\I_g \SMod(\S_g)] = [\Sp(2g,\Z):\rho(\SMod(\S_g))].$$
	The claim now follows from Proposition~\ref{fact_2}.
\end{proof}

\subsection{Birman-Craggs homomorphisms} Given an Sp-quadratic form $\omega$ on $\H_1(\S_g, \Z)$ with $\Arf(\omega) = 0$, Birman and Craggs \cite{BC} constructed a homomorphism $\rho_\omega: \I_g \to \Z / 2\Z$ called \textit{Birman-Craggs homomorphism}. 
Recall that the \textit{extended Torelli group} $\widehat{\I}_g$ is the extension of $\I_g$ by any hyperelliptic involution. Gaifullin \cite[Theorem 1]{Gaifullin_BCJ} proved that a Birman-Craggs homomorphism has a natural extension on the group $\widehat{\I}_g$. Moreover, he showed that its value on a hyperelliptic involution can computed explicitly via simple formula. We give this formula in the case $g=3$ and the involution $s$ shown in Fig. \ref{S}:
\begin{equation*}
\rho_\omega(s) = \sum_{1 \leq i < j \leq 3} \omega(a_i)\omega(b_i)(\omega(a_i)+1)\omega(b_i).
\end{equation*}
Recall that $\Arf(\omega) = 0$, so
\begin{equation*}
\rho_\omega(s) =
\end{equation*}
\begin{equation*}
= \omega(a_1)\omega(b_1)(\omega(a_2)+1)\omega(b_2) + \omega(a_1)\omega(b_1)(\omega(a_3)+1)\omega(b_3) + \omega(a_2)\omega(b_2)(\omega(a_3)+1)\omega(b_3) =
\end{equation*}
\begin{equation*}
= \omega(a_1)\omega(b_1)(\omega(a_2)+1)\omega(b_2) + \Bigl( \omega(a_1)\omega(b_1) + \omega(a_2)\omega(b_2) \Bigr) (\omega(a_3)+1)\omega(b_3) = 
\end{equation*}
\begin{equation*}
= \omega(a_1)\omega(b_1)(\omega(a_2)+1)\omega(b_2) + \Bigl( \Arf(\omega)+ \omega(a_3)\omega(b_3) \Bigr) (\omega(a_3)+1)\omega(b_3) = 
\end{equation*}
\begin{equation*}
= \omega(a_1)\omega(b_1)(\omega(a_2)+1)\omega(b_2) + \omega(a_3)\omega(b_3) (\omega(a_3)+1)\omega(b_3) = 
\end{equation*}
\begin{equation*}
= \omega(a_1)\omega(b_1)(\omega(a_2)+1)\omega(b_2).
\end{equation*}
Hence we obtain
\begin{equation} \label{BCHI}
\rho_\omega(s) = \omega(a_1)\omega(b_1)(\omega(a_2)+1)\omega(b_2).
\end{equation}
Formula (\ref{BCHI}) immediately implies the following result.
\begin{prop} \label{fact_1}
	For any hyperelliptic involution $s'$ on $\S_3$ there is a unique Sp-quadratic form $\omega'$ on $\H_1(\S_3, \Z)$ with $\Arf(\omega') = 0$ satisfying $\rho_{\omega'}(s') = 1$.
\end{prop}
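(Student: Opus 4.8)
The plan is to reduce the statement for an arbitrary hyperelliptic involution $s'$ to the case of the standard involution $s$ of Figure \ref{S}, for which formula (\ref{BCHI}) makes everything explicit, and then to transport the conclusion to $s'$ by conjugation.

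I first treat $s$ itself. Formula (\ref{BCHI}) expresses $\rho_\omega(s)$ as a product of four factors $\omega(a_1)$, $\omega(b_1)$, $(\omega(a_2)+1)$, $\omega(b_2)$ in $\Z/2\Z$, so $\rho_\omega(s) = 1$ holds if and only if each factor equals $1$, that is $\omega(a_1) = \omega(b_1) = \omega(b_2) = 1$ and $\omega(a_2) = 0$. These four conditions leave the pair $(\omega(a_3), \omega(b_3))$ free, so there are exactly four forms with $\rho_\omega(s) = 1$. Under these constraints one has $\Arf(\omega) = \omega(a_1)\omega(b_1) + \omega(a_2)\omega(b_2) + \omega(a_3)\omega(b_3) = 1 + \omega(a_3)\omega(b_3)$, so the requirement $\Arf(\omega) = 0$ forces $\omega(a_3) = \omega(b_3) = 1$. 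Hence exactly one form with $\Arf(\omega) = 0$ satisfies $\rho_\omega(s) = 1$, namely the form $\omega_0$ fixed above; this settles existence and uniqueness for $s$.

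For a general $s'$ I would argue as in the proof of Corollary \ref{fact_2_cor}: every hyperelliptic involution is $\Mod(\S_3)$-conjugate to $s$, so write $s' = \phi s \phi^{-1}$ with $\phi \in \Mod(\S_3)$. I then invoke the standard equivariance of the Birman--Craggs homomorphisms: $\Mod(\S_3)$ acts on the set of Sp-quadratic forms through the symplectic representation by $(\phi \cdot \omega)(v) = \omega(\rho(\phi)^{-1} v)$, this is a bijection preserving the Arf-invariant, and on the extended Torelli group the transformation law $\rho_{\phi \cdot \omega}(\phi h \phi^{-1}) = \rho_\omega(h)$ holds. The conjugate $s'$ indeed lies in $\widehat{\I}_3$, since every hyperelliptic involution acts as $-\id$ on $\H$ and hence lies in the single coset $s\,\I_3 \subset \widehat{\I}_3$, so $\rho_{\omega'}(s')$ is defined. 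Applying the transformation law with $h = s$ gives $\rho_{\omega'}(s') = \rho_{\phi^{-1}\cdot \omega'}(s)$; as $\omega'$ ranges over all forms with $\Arf = 0$, the form $\phi^{-1}\cdot\omega'$ ranges bijectively over the same set, so by the case of $s$ there is exactly one such $\omega'$ with $\rho_{\omega'}(s') = 1$, namely $\omega' = \phi \cdot \omega_0$.

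The count for $s$ is routine arithmetic in $\Z/2\Z$. The one point requiring care, and the main obstacle, is the equivariance step: I must make sure that Gaifullin's extension of $\rho_\omega$ to $\widehat{\I}_3$ is natural with respect to conjugation, so that the transformation law $\rho_{\phi \cdot \omega}(\phi h \phi^{-1}) = \rho_\omega(h)$ persists on the extended group $\widehat{\I}_3$ and not merely on $\I_3$. This naturality is exactly what licenses the reduction to the single model involution $s$ and thereby yields the uniqueness for every $s'$.
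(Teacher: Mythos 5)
Your proposal is correct and follows essentially the same route the paper intends: the paper's proof is just the assertion that formula (\ref{BCHI}) ``immediately implies'' the proposition, and your explicit count for the model involution $s$ together with the transport to a general $s'$ by conjugation (equivalently, by applying Gaifullin's formula in a symplectic basis adapted to $s'$) supplies exactly the omitted details. One cosmetic remark: since $\rho_\omega$ is only defined, and (\ref{BCHI}) only derived, for forms with $\Arf(\omega)=0$, your intermediate count of ``four forms with $\rho_\omega(s)=1$'' should be read as a count of forms satisfying the four factor conditions; this does not affect the final uniqueness.
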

For example, for the involution $s$ shown in Fig. \ref{S}  this Sp-quadratic form from Proposition \ref{fact_1} is exactly $\omega_0$.

\begin{corollary} \label{fact}
	The $\I_3$-orbit of a hyperelliptic involution $s'$ is uniquely determined by a unique Sp-quadratic form $\omega'$ on $\H_1(\S_3, \Z)$ with $\Arf(\omega') = 0$ satisfying $\rho_{\omega'}(s') = 1$.
\end{corollary}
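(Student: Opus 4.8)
The plan is to package the content of Proposition~\ref{fact_1} into an explicit map and then play its equivariance off against the count of Corollary~\ref{fact_2_cor}. Denote by $Q_0$ the set of Sp-quadratic forms $\omega$ on $\H_1(\S_3,\Z/2\Z)$ with $\Arf(\omega)=0$, and let $\mathcal H_3$ be the set of hyperelliptic involutions on $\S_3$. Proposition~\ref{fact_1} assigns to each $s'\in\mathcal H_3$ the unique $\omega_{s'}\in Q_0$ with $\rho_{\omega_{s'}}(s')=1$, giving a map $\Phi\colon \mathcal H_3\to Q_0$, $s'\mapsto\omega_{s'}$; by the remark after Proposition~\ref{fact_1} we have $\Phi(s)=\omega_0$. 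The corollary asserts precisely that $\Phi$ descends to a bijection $\mathcal H_3/\I_3 \to Q_0$, so it suffices to prove that the induced map $\bar\Phi$ is well-defined and surjective and then to compare cardinalities.

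The key ingredient is the $\Mod(\S_3)$-equivariance of $\Phi$: for every $\phi\in\Mod(\S_3)$ one has $\Phi(\phi s'\phi^{-1})=\bar\phi\cdot\omega_{s'}$, where $\bar\phi$ is the image of $\phi$ in $\Sp(6,\Z/2\Z)$ acting on quadratic forms by $(\bar\phi\cdot\omega)(x)=\omega(\bar\phi^{-1}x)$. I would derive this from the naturality of the extended Birman-Craggs homomorphism (Gaifullin \cite{Gaifullin_BCJ}): conjugation by $\phi$ carries the extended Torelli group built from $s'$ to the one built from $\phi s'\phi^{-1}$ and intertwines $\rho_\omega$ with $\rho_{\bar\phi\cdot\omega}$, i.e. $\rho_{\bar\phi\cdot\omega}(\phi x\phi^{-1})=\rho_\omega(x)$. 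Applying this with $x=s'$, and using that the $\Sp$-action preserves the Arf-invariant, the defining property of $\omega_{s'}$ together with the uniqueness in Proposition~\ref{fact_1} forces $\Phi(\phi s'\phi^{-1})=\bar\phi\cdot\omega_{s'}$.

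Granting equivariance, the rest is formal. Since every element of $\I_3$ acts trivially on $\H_1(\S_3,\Z/2\Z)$, its image in $\Sp(6,\Z/2\Z)$ is the identity, so $\Phi(h s' h^{-1})=\omega_{s'}$ for all $h\in\I_3$; thus $\Phi$ is constant on $\I_3$-orbits and descends to $\bar\Phi\colon\mathcal H_3/\I_3\to Q_0$. For surjectivity, recall that all hyperelliptic involutions are $\Mod(\S_3)$-conjugate, so $\mathcal H_3$ is a single $\Mod(\S_3)$-orbit; by equivariance its image $\Phi(\mathcal H_3)$ is a single orbit for the action of $\Mod(\S_3)$ on $Q_0$, which, because $\Mod(\S_3)\to\Sp(6,\Z/2\Z)$ is surjective, is a single $\Sp(6,\Z/2\Z)$-orbit containing $\omega_0$. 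As $\Sp(2g,\Z/2\Z)$ acts transitively on quadratic forms of a fixed Arf-invariant, this orbit is all of $Q_0$, so $\Phi$, and hence $\bar\Phi$, is onto. Finally $|Q_0|=2^{g-1}(2^g+1)=36$ for $g=3$, which by Corollary~\ref{fact_2_cor} equals $|\mathcal H_3/\I_3|$; a surjection between finite sets of equal cardinality is a bijection, so $\bar\Phi$ is injective, meaning the $\I_3$-orbit of $s'$ is determined by $\omega_{s'}$.

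The main obstacle is the equivariance step: I must ensure that the extended Birman-Craggs homomorphism transforms correctly under the full mapping class group (not merely under the Torelli group, where the action on forms is trivial) and that Gaifullin's extension is natural enough to yield $\rho_{\bar\phi\cdot\omega}(\phi x\phi^{-1})=\rho_\omega(x)$ on the extended Torelli groups. If one prefers to avoid appealing to naturality of the extension, the same identity can instead be checked directly from formula~(\ref{BCHI}) by choosing, for a given $s'$, a symplectic basis adapted to $s'$ and transporting it by $\phi$; this turns the equivariance into a bookkeeping computation with the explicit quadratic expression, which is routine but must be carried out carefully.
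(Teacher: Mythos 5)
Your proposal is correct and follows essentially the same route as the paper: the paper's own proof is exactly the counting argument ($36$ forms of Arf invariant $0$ versus the $36$ orbits of Corollary~\ref{fact_2_cor}, combined with Proposition~\ref{fact_1}), stated very tersely. Your write-up supplies the details the paper leaves implicit -- the $\Mod(\S_3)$-equivariance of $s'\mapsto\omega_{s'}$, the descent to $\I_3$-orbits, and the surjectivity via transitivity of $\Sp(6,\Z/2\Z)$ on forms of fixed Arf invariant -- all of which are standard and correct, so this is a faithful (and more complete) version of the paper's argument.
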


\begin{proof}
	Straightforward computation shows that there are $36$ distinct Sp-quadratic form $\omega'$ on $\H_1(\S_3, \Z)$ with $\Arf(\omega') = 0$. Since by Corollary \ref{fact_2_cor} there exist exactly $36$ distinct $\I_3$-orbits of hyperelliptic involutions on $\S_3$, then the result follows from Proposition \ref{fact_1}.
\end{proof}

\section{Classification of the simple abelian cycles}\label{Sec3}
The main goal of this section is to prove Propositions \ref{inj} and \ref{image}. First we need to prove some auxiliary results. From now on and throughout the paper, we denote $\H = \H_1(\S_3, \Z)$.

\subsection{A refined Euclidean algorithm}
Consider the group $\Z^2 = \left\langle a, b \right\rangle$ with a symplectic form given by the matrix
\begin{equation*}\label{int}
\begin{pmatrix}
0 & 1\\
-1 & 0
\end{pmatrix}.
\end{equation*}
Consider the standard generators of $\SL(2, \Z)$
\begin{equation*} \label{mat}
R_1 = \begin{pmatrix}
1 & 1\\
0 & 1
\end{pmatrix}, \;\;\; R_2 = \begin{pmatrix}
1 & 0\\
1 & 1
\end{pmatrix}.
\end{equation*}
The following lemma in a simple corollary of the Euclidean algorithm.
\begin{lemma} \label{Euc1}
	Let $(a', b')$ be any symplectic basis of $\Z^2$. Then $(a', b')$ is $\SL(2, \Z)$-equivalent to $(a, b)$. 
\end{lemma}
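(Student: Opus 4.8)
The plan is to recognize the claim as the transitivity of $\SL(2,\Z)$ on primitive vectors together with a single normalization of the second basis vector. Throughout I identify $\SL(2,\Z)$ with the group $\Sp(2,\Z)$ of automorphisms of $\Z^2$ preserving the given form, which is legitimate in rank $2$ since the symplectic condition on a $2\times 2$ integer matrix reduces to $\det = 1$. Accordingly, saying that $(a',b')$ is $\SL(2,\Z)$-equivalent to $(a,b)$ means producing $g \in \SL(2,\Z)$ with $g a = a'$ and $g b = b'$.

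First I would write $a' = p a + q b$ with $p, q \in \Z$. Because $(a', b')$ is a $\Z$-basis of $\Z^2$, the vector $a'$ is primitive, so $\gcd(p,q) = 1$. The Euclidean algorithm then furnishes integers $r, s$ with $ps - qr = 1$, and the matrix $g_1 = \begin{pmatrix} p & r \\ q & s \end{pmatrix}$ lies in $\SL(2,\Z)$ and satisfies $g_1 a = a'$. (Equivalently, one reduces the coordinate pair $(p,q)$ to $(1,0)$ by successively applying the generators $R_1^{\pm 1}$ and $R_2^{\pm 1}$, which is literally the Euclidean algorithm on $p$ and $q$; this expresses $g_1$ as an explicit word in $R_1, R_2$.)

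Next I would pass to $g_1^{-1}(a', b') = (a, b'')$, where $b'' = g_1^{-1} b'$, thereby reducing to the case $a' = a$. Since $g_1^{-1}$ preserves the intersection form, $a \cdot b'' = a' \cdot b' = 1$. Writing $b'' = m a + n b$ and using $a \cdot a = 0$ and $a \cdot b = 1$, this forces $n = a \cdot b'' = 1$, so $b'' = m a + b$. A direct induction shows that $R_1^m$ fixes $a$ and sends $b$ to $m a + b$, hence $R_1^m (a, b) = (a, b'')$. Combining, $g = g_1 R_1^{m}$ lies in $\SL(2,\Z)$ and sends $(a,b)$ to $(a', b')$, as desired.

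There is essentially no hard step here: the entire content is the Euclidean algorithm, and the only points requiring care are the primitivity of $a'$ (guaranteed by its being part of a basis) and the observation that preservation of the symplectic form pins $b''$ down to the coset $b + \Z a$, which is exactly the stabilizer of $a$ generated by $R_1$.
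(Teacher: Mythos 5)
Your proof is correct and follows essentially the same route as the paper: apply the Euclidean algorithm to the coordinates of the primitive vector $a'$ to produce an element of $\SL(2,\Z)$ matching $a$ with $a'$, then use the determinant/symplectic condition to see that the remaining discrepancy lies in $b+\Z a$ and is killed by a power of a standard unipotent generator. The only cosmetic difference is that the paper phrases this as reducing the coordinate matrix to the identity by left multiplication by $R_1,R_2$, which it does in order to set up the constrained version in Lemma \ref{Euc2}.
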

\begin{proof}
	Consider the matrix
	\begin{equation*}
	X=\begin{pmatrix}
	x_1 & x_2\\
	y_1 & y_2
	\end{pmatrix},
	\end{equation*}
	where $a' = x_1 a+ x_2 b$ and $b' = y_1 a+ y_2 b$. Since $(a', b')$ is a symplectic basis, it follows that $\det(X) = 1$. Hence $\gcd(x_1, y_1) = 1$.
	
	We need to transform $X$ to the identity matrix $E$ acting from the left by matrices $R_1, R_2 \in \SL(2, \Z)$, i.e. applying the standard Euclidean algorithm to the rows of $X$. 
	Since $\gcd(x_1, y_1) = 1$, then there exists $D \in \SL(2, \Z)$ such that $D(x_1, y_1)^T = (1, 0)^T$. Since  $\det(X) = 1$, then
	\begin{equation*}
	DX = \begin{pmatrix}
	1 & z\\
	0 & 1
	\end{pmatrix},
    \end{equation*}
    Finally, we have $R_2^{-z}DX = E$. This proves the lemma.
\end{proof}

Define an Sp-quadratic form $\nu$ on $\Z^2$ by $\nu(a) = 0$ and $\nu(b) = 1$. Consider the subgroup $G = \left\langle R^2_1, R_2\right\rangle  \subset \SL(2, \Z)$.

We prove the following modification of Lemma \ref{Euc1}.
\begin{lemma} \label{Euc2}
	Let $(a', b')$ be a symplectic basis of $\Z^2$ satisfying $\nu(a') = 0$ and $\nu(b') = 1$. Then $(a', b')$ is $G$-equivalent to $(a, b)$.
\end{lemma}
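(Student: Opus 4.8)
The plan is to reformulate $G$-equivalence matricially and then run a parity-constrained Euclidean algorithm in the spirit of Lemma \ref{Euc1}. I encode the basis $(a',b')$ by the unique $g \in \SL(2,\Z)$ with $g(a) = a'$ and $g(b) = b'$; writing $a' = x_1 a + x_2 b$ and $b' = y_1 a + y_2 b$, this is the matrix $g = \begin{pmatrix} x_1 & y_1 \\ x_2 & y_2 \end{pmatrix}$ whose columns are the coordinate vectors of $a'$ and $b'$. By definition $(a',b')$ is $G$-equivalent to $(a,b)$ precisely when $g \in G$, so the lemma reduces to showing $g \in G$.

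First I would make the hypotheses explicit. Since $\nu$ is an Sp-quadratic form with $\nu(a) = 0$ and $\nu(b) = 1$, the defining identity gives $\nu(a+b) = \nu(a) + \nu(b) + (a \cdot b \bmod 2) = 0$ and shows that $\nu$ factors through reduction modulo $2$; hence $\nu$ takes the value $1$ exactly on the classes congruent to $b$. Therefore $\nu(b') = 1$ forces $y_1$ even and $y_2$ odd, and then the relation $\det g = x_1 y_2 - x_2 y_1 = 1$ forces $x_1$ odd, so that $\nu(a') = 0$ holds automatically. Thus the hypotheses are equivalent to $y_1$ being even, i.e. to $g$ lying in the index-$3$ subgroup $\Gamma^0(2) \subset \SL(2,\Z)$ of matrices with even upper-right entry. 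Both generators $R_1^2$ and $R_2$ of $G$ lie in $\Gamma^0(2)$, so $G \subseteq \Gamma^0(2)$; the content of the lemma is that every such $g$ in fact lies in $G$.

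The heart of the argument is to reduce $g$ to a diagonal matrix by left multiplication by the generators $R_1^{\pm 2}$ and $R_2^{\pm 1}$, which corresponds to row operations on $g$ and, on the first column $(x_1, x_2)$, to the moves $(x_1, x_2) \mapsto (x_1 \pm 2 x_2, x_2)$ and $(x_1, x_2) \mapsto (x_1, x_2 \pm x_1)$. Both moves preserve $\gcd(x_1, x_2) = 1$ and the parity of $x_1$, and crucially $x_1$ stays odd. I would alternate: using $R_2^{\pm 1}$ reduce $x_2$ modulo $x_1$ so that $|x_2| \le |x_1|/2$ (stopping if $x_2 = 0$, in which case $x_1 = \pm 1$), and then using $R_1^{\pm 2}$ reduce $x_1$ modulo $2 x_2$ so that $|x_1| \le |x_2|$ — this is legitimate precisely because $x_1$ remains odd, hence nonzero, after the reduction. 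Each such round at least halves $|x_1|$, so after finitely many steps the first column becomes $(\pm 1, 0)$. The resulting matrix is then upper triangular with diagonal entries $\pm 1$, and since membership in $\Gamma^0(2)$ is preserved throughout, its upper-right entry $c$ is even. If the diagonal is $(1,1)$ the matrix is $(R_1^2)^{c/2} \in G$; if it is $(-1,-1)$ I use the explicit identity $-E = (R_2 R_1^{-2})^2 \in G$ to conclude it lies in $G$ as well. Hence the product $W$ of generators used satisfies $Wg \in G$, so $g \in G$.

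I expect the main obstacle to be exactly the restriction to $R_1^2$ rather than $R_1$: it only permits changing $x_1$ by even multiples of $x_2$, so one cannot reduce $x_1$ modulo $x_2$ but only modulo $2x_2$, and the naive Euclidean algorithm would stall whenever $x_1$ became even. The hypothesis $\nu(a') = 0$, $\nu(b') = 1$ is what rules this out, guaranteeing $x_1$ is odd at the start and — since $R_1^2$ and $R_2$ both preserve the parity of $x_1$ — throughout the reduction. The only other point requiring care is the sign bookkeeping at the end, which is handled once and for all by checking $-E \in G$.
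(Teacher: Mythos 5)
Your proof is correct and follows essentially the same route as the paper's: reformulate $G$-equivalence as membership of a single matrix in $G$, observe that the hypotheses force the relevant entry to be odd, run a Euclidean algorithm restricted to the generators $R_1^{\pm 2}$, $R_2^{\pm 1}$ (with the persistence of that odd parity guaranteeing the algorithm never stalls), and finish by an explicit computation for the residual upper-triangular matrices, including the $-E$ case. Your extra observation that $G \subseteq \Gamma^0(2)$, so that the residual off-diagonal entry is automatically even and the final power of $R_1^2$ really lies in $G$, is a useful precision that the paper's write-up leaves implicit.
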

\begin{proof}
	As in the proof of Lemma \ref{Euc1}, consider the matrix
	\begin{equation*}
	X=\begin{pmatrix}
	x_1 & x_2\\
	y_1 & y_2
	\end{pmatrix},
	\end{equation*}
	where $a' = x_1 a+ x_2 b$ and $b' = y_1 a+ y_2 b$. Since $(a', b')$ is a symplectic basis, it follows that $\det(X) = 1$, so $\gcd(x_1, y_1) = 1$. We have 
	$$1 = \nu(b') = \nu(y_1 a+ y_2 b) = (y_2 + y_1 y_2 \mod 2) = ((y_1 + 1)y_2 \mod 2).$$
	Therefore $y_1$ is even and $y_2$ is odd.
	
	We need to transform $X$ to the identity matrix $E$ acting from the left by matrices $R^2_1, R_2 \in \SL(2, \Z)$. We apply the following ``refined Euclidean algorithm'' consisting of iterations of the following step.
	
	$\bullet$ if $y_1 = 0$, then stop,
	
	$\bullet$ if $y_1 \neq 0$ and $|x_1|>|y_1|$, then apply $R_2^{-\sign (x_1y_1)}$,
	
	$\bullet$ if $y_1 \neq 0$ and $|x_1|<|y_1|$, then apply $R_1^{-2\sign (x_1y_1)}$.
	
	First, we claim that this algorithm is well-defined. Indeed, at each iteration we have that $y_1$ is even and $y_2$ is odd, i.e. $\nu(y_1 a+ y_2 b) = 1$. Hence the situation $|x_1| = |y_1|$ is not possible because $y_1$ is even and $\det (X) = 1$. Note that $x_1 = 0$ is also not possible. 
	
	We also claim that it stops in finitely many iterations. This follows from the fact that the number $|x_1|+|y_1|$ strictly decreases at each step, because we have $x_1 \neq 0$, $y_1 \neq 0$ and $|x_1| \neq |y_1|$.
	
	After this algorithm stops, we obtain a matrix
	$$X' = \begin{pmatrix}
	1 & z\\
	0 & 1
	\end{pmatrix} \;\; \mbox{or} \;\;\; X'' = \begin{pmatrix}
	-1 & z\\
	0 & -1
	\end{pmatrix}.$$
	In the first case we have $R_2^{-z}X' = E$. In the second case we have 
	$$R_1^{-2} X'' = \begin{pmatrix}
	-1 & z\\
	2 & -1-2z
	\end{pmatrix}, \; R_2 R_1^{-2} X'' = \begin{pmatrix}
	1 & -1-z\\
	2 & -1-2z
	\end{pmatrix},$$
	$$R_1^{-2}R_2 R_1^{-2} X'' = \begin{pmatrix}
	1 & -1-z\\
	0 & 1
	\end{pmatrix}.$$
	Hence $R_2^{z+1}R_1^{-2}R_2 R_1^{-2} X'' = E$. 
	This proves the lemma.
\end{proof}

\subsection{Proof of Proposition \ref{inj}}

First we prove the following statement.
\begin{lemma}\label{orbit1}
	Let $(\gamma, \delta)$ and $(\gamma', \delta')$ be two symmetric separating pairs. Then $(\gamma, \delta)$ and $(\gamma', \delta')$ are $\SMod(\S_3)$-equivalent.
\end{lemma}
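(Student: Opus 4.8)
The plan is to pass to the quotient orbifold under the hyperelliptic involution and reduce the statement to a transitivity statement for the mapping class group of the $8$-punctured sphere, where it will follow from the change of coordinates principle. Recall from the Birman-Hilden theorem (\ref{Smodex}) that the branched cover $\S_3 \to \S_3 / s = \S_{0,8}$ induces a surjection $\SMod(\S_3) \to \Mod(\S_{0,8})$, and that $s$-invariant curves which are isotopic are in fact $s$-equivariantly isotopic. The first step is to classify symmetric separating curves via this projection. Any $s$-invariant simple closed curve $\gamma$ either avoids the $8$ fixed points of $s$, in which case $s$ acts on $\gamma$ as a free rotation, or it meets exactly two fixed points, in which case $s$ acts as a reflection. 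In the latter case $\gamma$ projects to an arc joining two marked points whose complement in $\S_{0,8}$ is connected, so $\gamma$ is non-separating; hence a symmetric separating curve is of the first type and projects to an essential simple closed curve $\bar\gamma \subset \S_{0,8}$. Since $\gamma$ is connected, $\bar\gamma$ must enclose an odd number of marked points, and a Riemann--Hurwitz count shows that enclosing $2k+1$ points produces a genus-$k$ component; essentiality together with the genus-$1$/genus-$2$ splitting type forces $\bar\gamma$ to enclose exactly $3$ (equivalently $5$) marked points. This classification may also be extracted from \cite{BrendleMP}.

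Next I would analyze the configuration of the whole pair downstairs. Since $\gamma$ and $\delta$ are disjoint $s$-invariant curves avoiding the fixed points, their images $\bar\gamma, \bar\delta$ are disjoint, non-isotopic, essential simple closed curves in $\S_{0,8}$, each enclosing an odd number of marked points. Two disjoint simple closed curves cut $\S_{0,8}$ into two disks $R_1, R_3$ and a central annulus $R_2$; writing $n_i$ for the number of marked points in $R_i$, I would run through the admissible cases $n_1, n_3 \in \{3, 5\}$ subject to $n_1 + n_2 + n_3 = 8$. The possibilities with $n_2 = 0$ make $\bar\gamma$ and $\bar\delta$ cobound an unmarked annulus, hence isotopic, which is excluded; the case $n_1 = n_3 = 5$ is numerically impossible; and the remaining option is $(n_1, n_2, n_3) = (3,2,3)$. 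Thus every symmetric separating pair projects to one and the same combinatorial configuration.

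Finally, by the change of coordinates principle for punctured spheres, two systems of disjoint simple closed curves with the same complementary data --- here two disks each carrying $3$ (unlabeled) marked points and a central annulus carrying $2$ --- are related by an element of $\Mod(\S_{0,8})$. Since both configurations are moreover symmetric under interchanging the two $3$-point ends, this element can be chosen so that $\bar\gamma \mapsto \bar\gamma'$ and $\bar\delta \mapsto \bar\delta'$ in the prescribed order. Lifting this mapping class through the surjection (\ref{Smodex}) produces $h \in \SMod(\S_3)$ with $h(\gamma)$ isotopic to $\gamma'$ and $h(\delta)$ isotopic to $\delta'$, the isotopies being the $s$-equivariant lifts of those downstairs; this is precisely the desired $\SMod(\S_3)$-equivalence. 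I expect the main obstacle to be the classification of symmetric separating curves in the first step --- ruling out the reflection/arc type and pinning down the enclosed-point count --- after which the transitivity downstairs is a routine application of change of coordinates, and the passage back upstairs is handled by Birman--Hilden.
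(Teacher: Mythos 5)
Your proposal is correct and follows essentially the same route as the paper: project to $\S_{0,8}$ via the hyperelliptic quotient, identify the image configuration as two thrice-marked disks separated by a twice-marked annulus, apply change of coordinates in $\Mod(\S_{0,8})$, and lift through the Birman--Hilden sequence (\ref{Smodex}). The only difference is that you justify in detail what the paper merely asserts --- that symmetric separating curves avoid the fixed points and that the marked-point counts must be $(3,2,3)$ --- and these verifications are correct.
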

\begin{proof}
	Denote by $\widetilde{\gamma}$, $\widetilde{\gamma}'$, $\widetilde{\delta}$ and $\widetilde{\delta}'$ the images of the curves $\gamma$, $\gamma'$, $\delta$ and $\delta'$ under the projection $\S_3 \to \S_3 / s$. We can assume that these curves do not contain the cone points, so we can consider them as the curves on $\S_{0, 8}$. Consider the closures of the connected components of $\S_{0, 8} \setminus \{\widetilde{\gamma}, \widetilde{\delta}\}$. Two of them are spheres with $3$ punctures and one boundary component. The third one is a sphere with $2$ punctures and $2$ boundary components. The same is true for the closures of the connected components of $\S_{0, 8} \setminus \{\widetilde{\gamma}', \widetilde{\delta}'\}$. Hence the pairs $(\widetilde{\gamma}, \widetilde{\delta})$ and $(\widetilde{\gamma}', \widetilde{\delta}')$ are $\Mod(\S_{0, 8})$-equivalent. The exactness of \ref{Smodex} implies that $(\gamma, \delta)$ and $(\gamma', \delta')$ are $\SMod(\S_3)$-equivalent.
\end{proof}

\begin{proof}[Proof of Proposition \ref{inj}.]
	Lemma \ref{orbit1} implies that there exists $\phi \in \SMod(\S_3)$ such that $\phi(\gamma') = \gamma$ and $\phi(\delta') = \delta$. Lemma \ref{orbit1} also implies that without loss of generality we can assume that the curves $\gamma$ and $\delta$ are shown in Fig. \ref{S}. Denote by $X_1$, $X_2$ and $X_3$ the connected components of $\S_3 \setminus \{\gamma, \delta\}$ as shown in Fig. \ref{S}. We have $V_i = \H_1(X_i, \Z)$. The subgroups $V_i \subset \H$ are $\phi$-invariant.
	
	Consider the subgroups 
	\begin{gather*}
	K_1 = \left\langle T_{\alpha_1}, T_{\beta_1} \right\rangle  \subset \SMod(\S_3),\\
	K_2 = \left\langle T_{\alpha_2}T_{s(\alpha_2)}, T_{\beta_2} \right\rangle  \subset \SMod(\S_3),\\
	K_3 = \left\langle T_{\alpha_3}, T_{\beta_3} \right\rangle  \subset \SMod(\S_3).
	\end{gather*}
	For each $i = 1, 2, 3$ the group $K_i$ acts on $V_i \cong \Z^2 = \left\langle a_i, b_i \right\rangle $, so we have the homomorphisms $\tau_i: K_i \to \SL(2, \Z)$. We have
	\begin{gather*}
	\tau_1(T_{\alpha_1}) = R_1, \;\;\; \tau_1(T_{\beta_1}) = R_2,\\
	\tau_2(T_{\alpha_2}T_{s(\alpha_2)}) = R^2_1, \;\;\; \tau_2(T_{\beta_2}) = R_2,\\
	\tau_3(T_{\alpha_3}) = R_1, \;\;\; \tau_3(T_{\beta_3}) = R_2.
	\end{gather*}
	We obtain $\tau_1(K_1) = \tau_3(K_3) = \SL(2, \Z)$ and $\tau_2(K_2) = G$, where $G = \left\langle R^2_1, R_2\right\rangle  \subset \SL(2, \Z)$.
	
	Note that $([\phi(\alpha_1)], [\phi(\beta_1)])$ is a symplectic basis of $V_1$. Hence Lemma \ref{Euc1} implies that there exists $A_1 \in \SL(2, \Z)$, such that $A_1 [\phi(\alpha_1)] = a_1$ and $A_1 [\phi(\beta_1)] = b_1$. Choose $\psi_1 \in K_1$ such that $\tau_1(\psi_1) = A_1$.
	
	Similarly, there exists $A_3 \in \SL(2, \Z)$, such that $A_3 [\phi(\alpha_3)] = a_3$ and $A_3 [\phi(\beta_3)] = b_3$. Choose $\psi_3 \in K_3$ such that $\tau_3(\psi_3) = A_3$.

	Finally, $([\phi(\alpha_2)], [\phi(\beta_2)])$ is a symplectic basis of $V_2$. The Sp-quadratic form $\omega_0|_{V_2}$ has is defined by $\omega_0(a_2) = 0$ and $\omega_0(b_2) = 1$. Since $\phi \in \SMod(\S_3)$ it follows that  $\omega_0([\phi(\alpha_2)])=0$ and $\omega_0([\phi(\beta_1)]) = 0$. Hence Lemma \ref{Euc2} implies that there exists $A_2 \in G \subset \SL(2, \Z)$, such that $A_2 [\phi(\alpha_2)] = a_2$ and $A_2 [\phi(\beta_2)] = b_2$. Choose $\psi_2 \in K_2$ such that $\tau_2(\psi_2) = A_2$.
	
	Consider the element $\Phi = \psi_1 \circ \psi_2 \circ \psi_3 \circ \phi \in \SMod(\S_3)$. We have $\Phi(a_i) = a_i$ and $\Phi(b_i) = b_i$ for $i = 1,2,3$. Hence $\Phi \in \SI_3$. Moreover, we have $\Phi(\gamma') = \gamma$ and $\Phi(\delta') = \delta$. This implies the proposition.
\end{proof}

\subsection{Proof of Proposition \ref{image}}

\begin{proof}[Proof of Proposition \ref{image}]
	The implication from left to right is straightforward. Indeed, if $(V_1, V_2, V_3)$ is an orthogonal symmetric splitting, correspondent to symmetric separating pair $(\gamma, \delta)$, then we can chose a symplectic basis $\{a_1, b_1, a_2, b_2, a_3, b_3\}$ of $\H$ as shown in Fig. \ref{S}. Then formula (\ref{BCHI}) implies the result.
	 
	Now let us prove another implication: if $(V_1, V_2, V_3)$ is an orthogonal splitting of $\H$ satisfying \ref{cond}, then this splitting is symmetric. Consider a separating pair $(\gamma, \delta)$ (non necessary symmetric) such that $(V_1, V_2, V_3)$ corresponds to $(\gamma, \delta)$. However, there exist another hyperelliptic involution $s'$ such that $\gamma$ and $\delta$ are $s'$-invariant, see Fig. \ref{S2}.
	
	\begin{figure}[h]
		\begin{center}
			\scalebox{0.5}{
				\begin{tikzpicture}
				\draw[red, very thick, dashed] (2,3) to[out = -100, in = 100] (2, -3);
				\draw[red, very thick, dashed] (-2,3) to[out = -100, in = 100] (-2, -3);
				
				\draw[very thick] (-4, 3) to (4, 3);
				\draw[very thick] (-4, -3) to (4, -3);
				\draw[very thick] (-4, 3) arc (90:270:3);
				\draw[very thick] (4, 3) arc (90:-90:3);
				\draw[very thick] (0, 0) circle (1);
				\draw[very thick] (-4, 0) circle (1);
				\draw[very thick] (4, 0) circle (1);
				
				\draw[blue, very thick] (4,3) to[out = -70, in = 70] (4, 1);
				\draw[blue, very thick, dashed] (4,3) to[out = -110, in = 110] (4, 1);
				
				\draw[blue, very thick] (0,3) to[out = -70, in = 70] (0, 1);
				\draw[blue, very thick, dashed] (0,3) to[out = -110, in = 110] (0, 1);
				
				\draw[blue, very thick] (-4,3) to[out = -70, in = 70] (-4, 1);
				\draw[blue, very thick, dashed] (-4,3) to[out = -110, in = 110] (-4, 1);
				
				\draw[red, very thick] (2,3) to[out = -80, in = 80] (2, -3);
				\draw[red, very thick] (-2,3) to[out = -80, in = 80] (-2, -3);
				
				\node[red, scale = 2] at (2.7, 2) {$\delta$};
				\node[red, scale = 2] at (2.7-4, 2) {$\gamma$};
				
				\node[blue, scale = 2] at (4.75, 2) {$\alpha'_3$};
				\node[blue, scale = 2] at (4.75-4, 2) {$\alpha'_2$};
				\node[blue, scale = 2] at (4.75-8, 2) {$\alpha'_1$};
				
				\node[scale = 2] at (4.35, 0) {$\beta'_3$};
				\node[scale = 2] at (4.35-4, 0) {$\beta'_2$};
				\node[scale = 2] at (4.35-8, 0) {$\beta'_1$};

				\draw[very thick, dashed] (8, 0) to (10, 0);
				\draw[->, thick] (10, 2) to[out = -10, in = 10] (10, -2);
				\node[scale = 3] at (12, 0) {$s'$};

				\end{tikzpicture}} \end{center}
		\caption{}
		\label{S2}
	\end{figure}
	Denote $a_i' = [\alpha_i']$ and $b_i' = [\beta_i']$.
	Since $\Arf(\omega_0|_{V_1}) = \Arf(\omega_0|_{V_3}) = 0$, then
	\begin{equation*} 
	\omega_0(a_1') = \omega_0(b_1') = \omega_0(a_3') = \omega_0(b_3') = 1.
	\end{equation*}
	Also, since $\Arf(\omega_0) = 0$, the following cases are possible.
	
	(1) $\omega_0(a_2') = 0$, $\omega_0(b_2') = 0$.
	
	(2) $\omega_0(a_2') = 0$, $\omega_0(b_2') = 1$.
	
	(3) $\omega_0(a_2') = 1$, $\omega_0(b_2') = 0$.\\
	We define an element $\phi \in \Mod(\S_3)$ as follows.
	
	In the case (1) we define $\phi = T_{\alpha_2'}$.
	
	In the case (2) we define $\phi = \id$.
	
	In the case (3) we define $\phi =  T_{\beta_2'}  T_{\alpha_2'}$.\\
	Consider the involution $s'' = \phi^{-1} \circ s' \circ \phi$. In order to study it we consider in Fig. \ref{S3} the embedding of $\S_3$ that differs from the one shown in Fig. \ref{S2} by a representative of $\phi^{-1}$.
	
		\begin{figure}[h]
		\begin{center}
			\scalebox{0.5}{
				\begin{tikzpicture}
				\draw[red, very thick, dashed] (2,3) to[out = -100, in = 100] (2, -3);
				\draw[red, very thick, dashed] (-2,3) to[out = -100, in = 100] (-2, -3);
				
				\draw[very thick] (-4, 3) to (4, 3);
				\draw[very thick] (-4, -3) to (4, -3);
				\draw[very thick] (-4, 3) arc (90:270:3);
				\draw[very thick] (4, 3) arc (90:-90:3);
				\draw[very thick] (0, 0) circle (1);
				\draw[very thick] (-4, 0) circle (1);
				\draw[very thick] (4, 0) circle (1);
				
				\draw[blue, very thick] (4,3) to[out = -70, in = 70] (4, 1);
				\draw[blue, very thick, dashed] (4,3) to[out = -110, in = 110] (4, 1);
				
				\draw[blue, very thick] (0,3) to[out = -70, in = 70] (0, 1);
				\draw[blue, very thick, dashed] (0,3) to[out = -110, in = 110] (0, 1);
				
				\draw[blue, very thick] (-4,3) to[out = -70, in = 70] (-4, 1);
				\draw[blue, very thick, dashed] (-4,3) to[out = -110, in = 110] (-4, 1);
				
				\draw[red, very thick] (2,3) to[out = -80, in = 80] (2, -3);
				\draw[red, very thick] (-2,3) to[out = -80, in = 80] (-2, -3);
				
				\node[red, scale = 2] at (2.7, 2) {$\delta$};
				\node[red, scale = 2] at (2.7-4, 2) {$\gamma$};
				
				\node[blue, scale = 2] at (4.75, 2) {$\alpha'_3$};
				\node[blue, scale = 2] at (4.75-4, 2) {$\alpha''_2$};
				\node[blue, scale = 2] at (4.75-8, 2) {$\alpha'_1$};
				
				\node[scale = 2] at (4.35, 0) {$\beta'_3$};
				\node[scale = 2] at (4.35-4, 0) {$\beta''_2$};
				\node[scale = 2] at (4.35-8, 0) {$\beta'_1$};

				\draw[very thick, dashed] (8, 0) to (10, 0);
				\draw[->, thick] (10, 2) to[out = -10, in = 10] (10, -2);
				\node[scale = 3] at (12, 0) {$s''$};

				\end{tikzpicture}} \end{center}
		\caption{}
		\label{S3}
	\end{figure}
	We have $\alpha_2'' = \phi(\alpha_2')$ and $\beta_2'' = \phi(\beta_2')$. Denote $a_2'' = [\alpha_2'']$ and $b_2'' = [\beta_2'']$. Straightforward computation shows that these homology classes are as follows.
	
	In the case (1) we have $a_2'' = a_2'$ and $b_2'' = b_2' + a_2'$.
	
	In the case (2) we have $a_2'' = a_2'$ and $b_2'' = b_2'$.
	
	In the case (3) we have $a_2'' = a_2 + b_2'$ and $b_2'' = 2 b_2' + a_2'$.\\
	Straightforward computations show that $\omega_0(a_2'') = 0$ and $\omega_0(b_2'') = 1$. Formula (\ref{BCHI}) implies that $\rho_{\omega_0}(s'') = 1$. Hence Corollary \ref{fact} imply that there exists $h \in \SI_3$ such that $h \circ s'' \circ h^{-1} = s$. Then the curves $h(\gamma)$ and $h(\delta)$ are $s$-invariant, so the separating pair $(h(\gamma), h(\delta))$ is symmetric. Since $h \in \SI_3$, then the splitting $(V_1, V_2, V_3)$ corresponds to $(h(\gamma), h(\delta))$. Therefore $(V_1, V_2, V_3)$ is symmetric.	
\end{proof}

\section{Proof of the linear independence}\label{Sec4}

The main goal of this section is to prove Theorem \ref{mainth}. First we need to recall the construction of the symmetric complex of cycles, introduced be Brendle, Childers and Margalit in \cite{Brendle}.

\subsection{Morita homomorphisms}

First prove that each simple abelian cycle $\A_{V_1, V_2, V_3} \in \H_2(\SI_3, \Z)$ is nonzero and has the infinite order. We need to recall the definition of the Morita homomorphisms.
Recall that the \textit{Johnson kernel} $\K_g$ is the subgroup of $\Mod(\S_g)$ generated by all Dehn twists about separating curves. Since the group $\SI_g$ is generated by Dehn twists about $s$-invariant separating curves \cite{BrendleMP}, then it is contained in $\K_g$. 
	
	Recall the construction of the Morita homomorphisms $\K_g \to \Z$ (see \cite{Morita0, Morita}).
	For an oriented homology $3$-sphere $N$ denote by $\lambda(N) \in \Z$ the Casson invariant of $N$ (see \cite{Sav}). Let $\sigma: \S_g \hookrightarrow N$ be a Heegaard embedding. Denote by $V_+$ and $V_-$ the connected components of $N \setminus f(\S_g)$, such that the orientations of $f(\S_g) = \partial V_+$ induced by the orientations of $\S_g$ and $V_+$ coincide. Then we have the he Seifert bilinear form
	$$l_\sigma: \H_1(\S_g, \Z) \times \H_1(\S_g, \Z) \to \Z$$
	defined by
	$$l_\sigma (x, y) = \lk(\sigma_*(x), \sigma_*(y)^+),$$
	where $\sigma_*(y)^+$ is obtained from $\sigma_*(y)$ by pushing it inside $V_-$ and $\lk(\cdot, \cdot)$ is the linking number. We have
	$$l_\sigma (y, x) = l_\sigma (x, y) + x \cdot y.$$
	For $h \in \K_g$ denote by $N_h$ the homology $3$-sphere obtained from gluing $V_+$ and $V_-$ along $\sigma(\S_g)$ via the map $\sigma^{-1}\circ h \circ \sigma$. Morita \cite[Theorem 2.2]{Morita} proved, that the map $\lambda_\sigma: \K_g \to \Z$ given by  $\lambda_\sigma(h) = \lambda(N_h) - \lambda(N)$ is a well-defined group homomorphism. Moreover, if $\gamma$ is a separating curve on $\S_g$, then $\lambda_\sigma(T_\gamma)$ can be computed explicitly.
	We give this formula in the case $g=3$. Let $\{a, b\}$ be a symplectic basis of the first homology group of the genus-one component bounded by $\gamma$. Then
	\begin{equation}\label{mhom}
	\lambda_\sigma(T_\gamma) = (l_\sigma(a, a)l_\sigma(b, b) - l_\sigma(a, b)l_\sigma(b, a)).
	\end{equation}

	\begin{prop} \label{nonzero}
		Each simple abelian cycle $\A_{V_1, V_2, V_3} \in \H_2(\SI_3, \Z)$ is nonzero and has the infinite order.
	\end{prop}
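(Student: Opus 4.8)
The plan is to exhibit, for each simple abelian cycle $\A_{V_1, V_2, V_3}$, a single Morita homomorphism $\lambda_\sigma \colon \K_3 \to \Z$ whose induced map on $\H_2$ sends this cycle to a nonzero integer. Since $\SI_3 \subseteq \K_3$, any homomorphism $\lambda_\sigma$ restricts to $\SI_3$, and the restriction induces a homomorphism $(\lambda_\sigma)_* \colon \H_2(\SI_3, \Z) \to \H_2(\Z, \Z) = 0$ — so a \emph{single} Morita homomorphism cannot detect a $2$-cycle. The correct device is therefore a \emph{product} of two Morita homomorphisms. More precisely, recall that for commuting elements $h_1, h_2$ the abelian cycle $\A(h_1, h_2)$ is the image of the fundamental class $\mu_2 \in \H_2(\Z^2, \Z)$ under $\phi_* $, where $\phi \colon \Z^2 \to \SI_3$ sends the generators to $h_1 = T_\gamma$ and $h_2 = T_\delta$. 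Given two homomorphisms $f, g \colon \SI_3 \to \Z$, their pair defines a map $(f, g) \colon \SI_3 \to \Z^2$, and the composite $\H_2(\SI_3, \Z) \to \H_2(\Z^2, \Z) \cong \Z$ evaluated on $\A(T_\gamma, T_\delta)$ equals the determinant $f(T_\gamma) g(T_\delta) - f(T_\delta) g(T_\gamma)$. This pairing is the standard way abelian cycles are detected and will be the engine of the proof.

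First I would fix a symmetric separating pair $(\gamma, \delta)$ realizing the splitting $(V_1, V_2, V_3)$, with connected components $X_1, X_2, X_3$ as in Fig.~\ref{S}, so that $\gamma = \partial X_1$ bounds the genus-one piece $X_1$ and $\delta = \partial X_3$ bounds the genus-one piece $X_3$. Next I would choose a Heegaard embedding $\sigma \colon \S_3 \hookrightarrow N$ together with its Seifert form $l_\sigma$, and compute $\lambda_\sigma(T_\gamma)$ and $\lambda_\sigma(T_\delta)$ via formula (\ref{mhom}): for $\gamma$ one uses a symplectic basis $\{a_1, b_1\}$ of $V_1 = \H_{X_1}$, obtaining $\lambda_\sigma(T_\gamma) = l_\sigma(a_1, a_1) l_\sigma(b_1, b_1) - l_\sigma(a_1, b_1) l_\sigma(b_1, a_1)$, and analogously for $\delta$ using a basis of $V_3$. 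The key point is that the Seifert form $l_\sigma$ can be chosen freely subject only to the constraint $l_\sigma(y, x) = l_\sigma(x, y) + x \cdot y$, since any such bilinear form is realized by some Heegaard embedding. I would pick two different Seifert forms $l_\sigma$ and $l_{\sigma'}$ (hence two Morita homomorphisms $\lambda_\sigma, \lambda_{\sigma'}$) so that the resulting $2 \times 2$ matrix of values
\begin{equation*}
\begin{pmatrix} \lambda_\sigma(T_\gamma) & \lambda_\sigma(T_\delta) \\ \lambda_{\sigma'}(T_\gamma) & \lambda_{\sigma'}(T_\delta) \end{pmatrix}
\end{equation*}
has nonzero determinant; for instance one arranges $\lambda_\sigma(T_\gamma) \neq 0$ while $\lambda_\sigma(T_\delta) = 0$ and $\lambda_{\sigma'}(T_\delta) \neq 0$ by making the quadratic expression in (\ref{mhom}) vanish on $V_3$ but not on $V_1$ for $\sigma$, and conversely for $\sigma'$. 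The determinant being nonzero shows that $\A_{V_1, V_2, V_3}$ maps to a nonzero element of $\Z$ under $(\lambda_\sigma, \lambda_{\sigma'})_*$, whence the cycle is nonzero; and since the image is an \emph{integer} detected by a homomorphism to $\Z$, the cycle has infinite order.

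The main obstacle I anticipate is twofold. First, I must verify that the Seifert form can indeed be prescribed independently on the two summands $V_1$ and $V_3$ (which are orthogonal, so there is no interference in the intersection form, but the linking form $l_\sigma$ could in principle couple them); this requires knowing that the space of realizable Seifert forms is large enough — genuinely a surjectivity/realization statement about Heegaard embeddings, which is where I would lean on the cited results of Brendle–Farb \cite{BrendleFarb} and Gaifullin \cite{Gaifullin_J}. Second, one must confirm the functoriality computation that the composite $\H_2(\SI_3) \to \H_2(\Z^2) \cong \Z$ really evaluates to the determinant $\lambda_\sigma(T_\gamma)\lambda_{\sigma'}(T_\delta) - \lambda_\sigma(T_\delta)\lambda_{\sigma'}(T_\gamma)$; this is the naturality of abelian cycles under a pair of homomorphisms to $\Z$ and is routine but must be stated cleanly. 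Once the realization of a suitable pair of Seifert forms is secured, the nonvanishing of the determinant is an elementary computation with (\ref{mhom}), and infinite order follows immediately from the target being $\Z$.
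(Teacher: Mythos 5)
Your proposal is correct and takes essentially the same approach as the paper: the paper also detects $\A(T_\gamma, T_\delta)$ by pairing it with the cup product of two Morita homomorphisms and arranging the $2\times 2$ matrix of values on $T_\gamma, T_\delta$ to be the identity, whence determinant $\pm 1$ and infinite order. The one implementation difference is that where you invoke a general realization statement for Seifert forms (which you rightly flag as the delicate point), the paper sidesteps it by fixing the standard embedding $\sigma\colon \S_3 \hookrightarrow S^3$ and precomposing with explicit mapping classes $\phi_1, \phi_3$, so that $l_{\sigma\circ\phi}(x,y) = l_\sigma(\phi_*x, \phi_*y)$ ranges only over the $\Sp(6,\Z)$-orbit of the standard Seifert form --- which already suffices to make $\lambda_{\sigma\circ\phi_1}$ see $V_1$ but not $V_3$ and vice versa.
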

	
	\begin{proof}
	We will show that for any two nonisotopic disjoint separating curves $\gamma$ and $\delta$ on $\S_3$ the abelian cycle $\A(T_\gamma, T_\delta) \in \H_2(\K_3, \Z)$ is nonzero and has the infinite order. We will use Morita homomorphism and follow ideas of \cite{BrendleFarb} and \cite{Gaifullin_J}.

	Let $\sigma: \S_3 \hookrightarrow S^3$ be the standard embedding and let $\{a_1, b_1, a_2, b_2, a_3, b_3\}$ be a symplectic basis of $\H$ shown in Fig. \ref{S} (in this case $V_+$ is inside $\S_3$). Then we have
	$$l_\sigma (a_i, a_j) = l_\sigma (b_i, b_j) = l_\sigma (a_i, b_j) = 0 \;\; \mbox{for all} \;\; 1 \leq i, j \leq 3,$$
	$$l_\sigma (b_i, a_i) = 1 \;\; \mbox{for all} \;\; 1 \leq i \leq 3,$$
	$$l_\sigma (b_i, a_j) = 0 \;\;\ \mbox{for all} \;\; i \neq j \;\; \mbox{and} \;\; 1 \leq i, j \leq 3.$$ 
	The following idea is taken from the paper \cite{Gaifullin_J} of Gaifullin. Consider the elements $\phi_1, \phi_3 \in \Mod(\S_3)$, such that their images $(\psi_1)_*$ and $(\psi_3)_*$ in $\Sp(6, \Z)$ are given by the following matrices in the standard basis $\{a_1, b_1, a_2, b_2, a_3, b_3\}$.
	$$(\psi_1)_*=
	\begin{pmatrix}
	1 & 1 & 1 & 0 & 0 & 0 \\
	0 & 1 & 1 & 0 & 0 & 0 \\
	1 & 0 & 1 & 1 & 0 & 0 \\
	1 & 0 & 0 & 1 & 0 & 0 \\
	0 & 0 & 0 & 0 & 1 & 0 \\
	0 & 0 & 0 & 0 & 0 & 1 
	\end{pmatrix}, \;\;\; (\psi_3)_*=
	\begin{pmatrix} 
	1 & 0 & 0 & 0 & 0 & 0 \\
	0 & 1 & 0 & 0 & 0 & 0 \\
	0 & 0 & 1 & 1 & 1 & 0 \\
	0 & 0 & 0 & 1 & 1 & 0 \\
	0 & 0 & 1 & 0 & 1 & 1 \\
	0 & 0 & 1 & 0 & 0 & 1 
	\end{pmatrix}.
	$$
	For any element $\phi \in \Mod(\S_3)$ we have $l_{\sigma \circ \phi}(x, y) = l_\sigma(\phi_*(x), \phi_*(y))$. Hence by (\ref{mhom}) we obtain
	$$\lambda_{\sigma \circ \phi_1}(T_\gamma) = (l_\sigma(a_1+a_2+b_2, a_1+a_2+b_2)l_\sigma(a_1+b_1, a_1+b_1) - $$
	$$- l_\sigma(a_1+a_2+b_2, a_1+b_1)l_\sigma(a_1+b_1, a_1+a_2+b_2)) = 1, $$
	$$\lambda_{\sigma \circ \phi_1}(T_\delta) = (l_\sigma(a_3, a_3)l_\sigma(b_3, b_3) - l_\sigma(a_3, b_3)l_\sigma(b_3, a_3)) = 0,$$
	
	$$\lambda_{\sigma \circ \phi_3}(T_\gamma) = (l_\sigma(a_1, a_1)l_\sigma(b_1, b_1) - l_\sigma(a_1, b_1)l_\sigma(b_1, a_1)) = 0,$$
	$$\lambda_{\sigma \circ \phi_3}(T_\delta) = (l_\sigma(a_2+b_2+a_3, a_2+b_2+a_3)l_\sigma(a_3+b_3, a_3+b_3) - $$
	$$- l_\sigma(a_2+b_2+a_3, a_3+b_3)l_\sigma(a_3+b_3, a_2+b_2+a_3)) = 1.$$
	
	Consider the homology class $\Theta = \lambda_{\sigma \circ \phi_1} \cdot \lambda_{\sigma \circ \phi_3} \in \H^2(\K_3, \Z)$. We obtain
	$$\left\langle  \Theta, \A(T_\gamma, T_\delta) \right\rangle = -\det \begin{pmatrix}
	\lambda_{\sigma \circ \phi_1}(T_\gamma) & \lambda_{\sigma \circ \phi_1}(T_\delta) \\
	\lambda_{\sigma \circ \phi_3}(T_\gamma) & \lambda_{\sigma \circ \phi_3}(T_\delta)
	\end{pmatrix} = -\det \begin{pmatrix}
	1 & 0 \\
	0 & 1
	\end{pmatrix} = -1.$$
	This implies that the abelian cycle $\A(T_\gamma, T_\delta) \in \H_2(\K_3, \Z)$ is nonzero and has the infinite order. Since $\SI_3$ is a subgroup of $\K_3$, this implies the Proposition \ref{nonzero}.
\end{proof}

\subsection{Complex of symmetric cycles}
Let us recall the construction of the \textit{complex of symmetric cycles} $\SB_g(x)$ introduced by Brendle, Childers and Margalit in \cite{Brendle}. For an oriented curve $c$ on $\S_g$ we denote by $\overleftarrow{c}$ the reverse curve of $c$. Let us denote by $\C$ the set of all isotopy classes of oriented non-separating simple closed curves on $\S_{g}$.

A \textit{skew-symmetric pair of curves} is a pair of disjoint oriented closed curves $\{c_1, c_2\}$ on $\S_g$ such that $s(c_1) = \overleftarrow{c_2}$ and $s(c_2) = \overleftarrow{c_1}$. Note that $c_1$ and $c_2$ can be isotopic, but cannot be homologicaly trivial. 
A \textit{skew-symmetric multicurve} is a nonempty collection of skew-symmetric pairs of curves that are pairwise disjoint and pairwise non-isotopic.

Let us fix some nonzero homology class $x \in \H_1(\S_g, \Z)$.
A \textit{1-cycle} for $x$ is a finite formal sum
$$c = \sum_{i=1}^n k_i c_i,$$
where $k_i \geq 0$, $c_i \in \C$ and $\sum_{i=1}^n k_i [c_i] = x$. The set $\{c_i\}$ is called the \textit{support} of $c$. 
A \textit{basis skew-symmetric 1-cycle} for $x$ is a 1-cycle
$$\sum_{i=1}^n \dfrac{k_i}{2} (c_i + s(\overleftarrow{c_i})),$$
where the support $\{c_i, s(\overleftarrow{c_i})\}$ is a skew-symmetric multicurve, and where the homology classes $[c_i]$ are linearly independent.

Denote by $\mathcal{SM}$ the set of isotopy classes of skew-symmetric multicurves that are unions of supports of basis skew-symmetric cycles for $x$. Let $M = \{c_1, s(\overleftarrow{c_1}), \dots, c_m, s(\overleftarrow{c_m})\} \in \C$ be a skew-symmetric multicurve. Consider a convex polytope
$$P_M = \{ (k_1, \dots, k_m) \in \R^m \subset \R^\C \; | \; \sum_{i=1}^m \dfrac{k_i}{2} (c_i + s(\overleftarrow{c_i})) \; \mbox{is a skew-symmetric cycle for} \; x\}.$$
The complex of symmetric cycles is defined by $$\SB_g(x) = \bigcup_{M \in \mathcal{SM}} P_M \subset \R^\C.$$
Denote by $\mathcal{SM}_0 \subset \mathcal{SM}$ the subset of skew-symmetric multicurves corresponding to 0-cells of $\SB_g(x).$

\begin{theorem} \cite[Theorem 2.1]{Brendle} \label{Contr}
	Let $g \geq 1$ and let $x \in \H_1(\S_g, \Z)$ be any nonzero primitive element. Then $\SB_g(x)$ is contractible.
\end{theorem}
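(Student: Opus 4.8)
The plan is to adapt, $s$-equivariantly, the contractibility argument of Bestvina--Bux--Margalit for the ordinary complex of cycles $\B_g$. Since $s$ has finite order, we may fix an $s$-invariant hyperbolic metric on $\S_g$; then every isotopy class of simple closed curve has a unique geodesic representative, and the geodesic representatives of a skew-symmetric pair $\{c, s(\overleftarrow{c})\}$ are interchanged (up to orientation) by $s$. Working with geodesic representatives makes geometric intersection numbers behave additively under the surgeries below and makes the whole construction canonical, hence compatible with the hyperelliptic symmetry.

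First I would introduce an $s$-invariant complexity function on the $0$-cells $\mathcal{SM}_0$. Fix an auxiliary $s$-invariant simple closed curve (or $1$-cycle) $d$ transverse to all curves under consideration, and define the complexity of a skew-symmetric multicurve $M$ to be the total geometric intersection number of its support with $d$, refined lexicographically if necessary. Because $d$ is $s$-invariant, this quantity is constant on $s$-orbits of curves, so it descends to a well-defined height function on $\SB_g(x)$ that is adapted to the skew-symmetric structure. The minimal cycles are precisely those whose support can be isotoped off $d$.

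Second I would define the surgery move and the associated deformation. For a non-minimal basis skew-symmetric $1$-cycle one locates an innermost intersection of its support with $d$ and resolves it by the unique orientation- (hence homology-) preserving surgery; in the symmetric setting this resolution must be performed simultaneously on a curve $c_i$ and on its partner $s(\overleftarrow{c_i})$, so that the defining relation of a skew-symmetric pair survives and the output is again a basis skew-symmetric cycle for $x$ with homologically independent support. Primitivity of $x$ is used here to guarantee that the surgered configuration still represents $x$ and lies in $\SB_g(x)$. These canonical equivariant surgeries strictly decrease the complexity, and the key lemma is that, with respect to the height function, the descending link of every cell is a cone on the canonically surgered cycle, hence contractible. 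Combinatorial Morse theory then yields that $\SB_g(x)$ deformation retracts onto the subcomplex spanned by the minimal skew-symmetric cycles, and this subcomplex is contractible, since for a primitive class the minimal skew-symmetric cycle adapted to $d$ is essentially unique up to the polytope $P_M$ of admissible coefficients, so the subcomplex reduces to a single cell. This proves the theorem.

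The hard part will be the equivariant surgery. In the non-symmetric situation one resolves a single crossing at a time, but here a crossing and its $s$-image can interfere: the two strands being resolved may themselves be exchanged by $s$, or the relevant intersection point may lie near one of the $2g+2$ fixed points of $s$, where the forced symmetry of the resolution can obstruct the naive decrease in complexity and can threaten disjointness or non-isotopy of the surgered pairs. Verifying that a canonical $s$-equivariant resolution exists in all of these fixed-point configurations, that it preserves the skew-symmetric structure, and that it still produces a contractible descending link is the essential technical obstacle; once this is established, the remainder is an orbit-wise translation of the Bestvina--Bux--Margalit deformation. An alternative worth exploring is to project each skew-symmetric pair to a single curve on the quotient $\S_g/s \cong \S_{0,2g+2}$ and to realize $\SB_g(x)$ through a complex of cycles downstairs, thereby reducing the equivariance bookkeeping to ordinary surgeries on the punctured sphere.
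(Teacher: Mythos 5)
This statement is not proved in the paper at all: it is quoted verbatim as \cite[Theorem 2.1]{Brendle}, so there is no internal proof to compare against. Judged against the actual proof in Brendle--Childers--Margalit, your overall strategy is the right one in spirit --- they too obtain contractibility of $\SB_g(x)$ by adapting the Bestvina--Bux--Margalit contraction of the complex of cycles to the symmetric setting, with the Birman--Hilden correspondence to the quotient sphere $\S_{0,2g+2}$ (your ``alternative worth exploring'') playing a genuine role in their analysis of symmetric multicurves.

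However, what you have written is a plan rather than a proof, and the gap is exactly where you say it is: the existence of a canonical $s$-equivariant surgery that preserves the skew-symmetric structure, strictly decreases complexity, and behaves correctly near the $2g+2$ Weierstrass points is the entire content of the theorem, and you leave it as an acknowledged obstacle. Beyond that, two of your auxiliary claims are not justified and are false as stated. First, in the Bestvina--Bux--Margalit scheme a single surgery move does not suffice and the descending link of a cell is not simply ``a cone on the canonically surgered cycle''; their argument needs two distinct kinds of moves (draining and surgering) and a nontrivial analysis of which combinations of them are admissible, all of which must be redone equivariantly. Second, the subcomplex of cycles of minimal complexity with respect to an auxiliary curve $d$ is not ``essentially a single cell'': minimal representatives of $x$ form a large family in general, and the contractibility of this terminal subcomplex requires its own argument (in BBM it is handled by showing the relevant union of polytopes $P_M$ is contractible, not a point). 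Also, primitivity of $x$ is not what makes surgery preserve the homology class --- surgery always does --- it is what guarantees that basis cycles for $x$ exist at all, so that $\SB_g(x)$ is nonempty and the induction has somewhere to terminate. As it stands the proposal correctly identifies the architecture of the known proof but does not supply any of the steps that make it work.
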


\subsection{The spectral sequence for a group action on a CW-complex}
Suppose that a group $G$ acts cellularly without rotations on a contractible $CW$-complex $X$. Then there is a corresponding spectral sequence (see (7.7) in \cite[Section VII.7]{Brown}) has the form
\begin{equation} \label{spec_sec}
E_{p, q}^1 \cong \bigoplus_{\sigma \in \X_p}\H_q(\Stab_G (\sigma)) \Rightarrow \H_{p+q}(G, \Z),
\end{equation}
where $\X_p$ is a set containing one representative from each $G$-orbit of $p$-cells of $X$. 

Now let $E_{*,*}^*$ be the spectral sequence (\ref{spec_sec}) for the action on $\SI_g$ on $\SB_g(x)$ for some primitive element $0 \neq x \in \H_{1}(\S_g, \Z)$.
The fact that $\SI_g$ acts on $\SB_g(x)$ without rotations follows from the result of Ivanov \cite[Theorem 1.2]{Ivanov}.
Brendle, Childers and Margalit proved \cite[Proposition 4.12]{Brendle} that for each cell $\sigma \in \SB_g(x)$ we have
\begin{equation} \label{ineq}
\dim(\sigma) + \cd(\Stab_{\SI_g}(\sigma)) \leq g-1.
\end{equation}
This immediately implies $E^1_{p, q} = 0$ for $p+q > g-1$. Hence all differentials $d^1, d^2, \dots$ to the group $E^1_{0, g-1}$ are trivial, so $E^1_{0, g-1} = E^{\infty}_{0, g-1}$. Therefore we have the following result.
\begin{prop} \label{prop1}
	Let $\mathfrak{M} \subseteq \mathcal{SM}_{0}(x)$ be a subset consisting of skew-symmetric multicurves from pairwise different $\SI_g$-orbits. Then the inclusions $\Stab_{\SI_g}(M) \subseteq \SI_g$, where $M \in \mathfrak{M}$, induce an injective homomorphism
	\begin{equation*} \label{c-l}
	\bigoplus_{M \in \mathfrak{M}} \H_{g-1}(\Stab_{\SI_g}(M), \Z) \hookrightarrow \H_{g-1}(\SI_g), \Z).
	\end{equation*}
\end{prop}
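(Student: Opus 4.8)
The plan is to extract the proposition from the bottom-left corner of the spectral sequence $E^*_{*,*}$, using that the group $E^1_{0,g-1}$ has already been identified with $E^\infty_{0,g-1}$ in the discussion preceding the statement.

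Recall that, since $\SI_g$ acts without rotations, the $E^1$-page has the form (\ref{spec_sec}) with untwisted coefficients, so
$$E^1_{0,g-1} \cong \bigoplus_{\sigma \in \X_0} \H_{g-1}(\Stab_{\SI_g}(\sigma), \Z),$$
where $\X_0$ is a set of representatives for the $\SI_g$-orbits of $0$-cells of $\SB_g(x)$, i.e. of the elements of $\mathcal{SM}_0(x)$. We have already seen that the vanishing $E^1_{p,q} = 0$ for $p+q > g-1$, a consequence of (\ref{ineq}), forces $E^1_{0,g-1} = E^\infty_{0,g-1}$: the outgoing differentials land in the region $p<0$ and the incoming ones originate from $E^r_{r,g-r}$, which sits on the antidiagonal $p+q = g > g-1$ and is therefore zero.

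The first new point is that $E^\infty_{0,g-1}$ is not merely a subquotient, but a genuine \emph{subgroup} of $\H_{g-1}(\SI_g, \Z)$. Indeed, if $0 = F_{-1} \subseteq F_0 \subseteq \dots \subseteq F_{g-1} = \H_{g-1}(\SI_g, \Z)$ denotes the filtration associated with the spectral sequence, then $E^\infty_{0,g-1} = F_0 / F_{-1} = F_0$, the bottom step of the filtration. The second point is the identification of the corresponding edge map: by the standard description of the equivariant homology spectral sequence \cite[Section VII.7]{Brown}, the composite
$$\bigoplus_{\sigma \in \X_0} \H_{g-1}(\Stab_{\SI_g}(\sigma), \Z) = E^1_{0,g-1} = E^\infty_{0,g-1} = F_0 \hookrightarrow \H_{g-1}(\SI_g, \Z)$$
is injective and restricts, on the summand indexed by $\sigma$, to the map induced by the inclusion $\Stab_{\SI_g}(\sigma) \hookrightarrow \SI_g$.

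To finish, I would choose the set of orbit representatives $\X_0$ so that $\mathfrak{M} \subseteq \X_0$; this is legitimate precisely because the multicurves in $\mathfrak{M}$ represent pairwise distinct $\SI_g$-orbits. Then $\bigoplus_{M \in \mathfrak{M}} \H_{g-1}(\Stab_{\SI_g}(M), \Z)$ is a direct summand of $E^1_{0,g-1}$, and the injection above restricts on it to precisely the homomorphism in the statement, which is therefore injective. The only substance beyond bookkeeping lies in the two standard spectral-sequence facts — that the corner term survives to $E^\infty$ and embeds as the bottom filtration piece via the stabilizer-inclusion edge map — so I do not anticipate a genuine obstacle here, since all the geometric input has already been packaged into the inequality (\ref{ineq}).
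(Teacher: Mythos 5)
Your proposal is correct and follows exactly the paper's route: the paper derives the proposition from the vanishing $E^1_{p,q}=0$ for $p+q>g-1$ (a consequence of the inequality $\dim(\sigma)+\cd(\Stab_{\SI_g}(\sigma))\leq g-1$), which gives $E^1_{0,g-1}=E^\infty_{0,g-1}$, the latter being the bottom filtration piece of $\H_{g-1}(\SI_g,\Z)$ with the edge map induced by the stabilizer inclusions. You merely make explicit the standard spectral-sequence bookkeeping (incoming differentials from the antidiagonal $p+q=g$, identification of the edge map, enlarging $\mathfrak{M}$ to a full set of orbit representatives) that the paper leaves implicit.
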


\subsection{Symmetric curves on one- and two-punctured tori}

Consider a torus $\mathbb{T}$ with one marked point $p$, equipped with a hyperelliptic involution $s$ stabilising $p$. We will conveniently consider marked points instead of punctures, because we will work in terms of the homology groups of the torus without punctures.

\begin{lemma} \label{t1}
	Any nonzero primitive homology class $w \in \H_1(\mathbb{T}, \Z)$ can be realised by an $s$-invariant simple closed curve on $\mathbb{T}$ disjoint from $p$
\end{lemma}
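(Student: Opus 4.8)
The plan is to work on the torus $\mathbb{T}$ with one marked point $p$ and the hyperelliptic involution $s$, and to realise the given primitive class $w$ by an $s$-invariant curve. First I would set up explicit coordinates: model $\mathbb{T}$ as $\R^2 / \Z^2$ with the involution $s$ induced by $x \mapsto -x$, choosing the lift so that the marked point $p$ is one of the four fixed points (for instance the image of the origin). Under this identification $\H_1(\mathbb{T}, \Z) \cong \Z^2$, and a primitive class $w = (m, n)$ with $\gcd(m, n) = 1$ is represented by the image of the straight line segment of slope $n/m$ through a suitable base point.

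The key observation is that the standard straight-line (geodesic) representative of a primitive class is automatically $s$-invariant as a \emph{set}, because the map $x \mapsto -x$ sends a line of slope $n/m$ through the origin to itself, reversing its orientation. So the natural candidate curve $c$ realising $w = (m, n)$ is the image in $\mathbb{T}$ of the line $t \mapsto t \cdot (m, n)$. This line passes through the origin, hence through the fixed point lying over $p$, which is exactly what we want to \emph{avoid}; the curve must be disjoint from $p$. The fix is to translate: replace the line through the origin by a parallel line $t \mapsto t \cdot (m, n) + v$ for a small offset $v$. The second step is therefore to choose $v$ so that (i) the translated line still projects to a simple closed curve realising $w$ (any nonzero offset works, since parallel lines of primitive slope still close up simply), (ii) it misses all four half-integer fixed points, in particular $p$, and (iii) it remains $s$-invariant. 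Condition (iii) is the delicate one: a generic parallel line is \emph{not} $s$-invariant, since $-(t(m,n)+v) = -t(m,n) - v$ lies on the line $s \mapsto s(m,n) - v$, a different parallel translate.

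The main obstacle, and the heart of the argument, is reconciling $s$-invariance with disjointness from the fixed point $p$. I expect to resolve it by choosing the offset $v$ so that the translated line is invariant under $s$ composed with a \emph{deck translation}: concretely, I would pick $v$ to be half of a lattice vector not parallel to $(m,n)$, say $v = \tfrac12 e$ where $e \in \Z^2$ and $\det\begin{pmatrix} m & e_1 \\ n & e_2 \end{pmatrix}$ is odd, so that $-v \equiv v \pmod{\Z^2}$ after the appropriate integer shift along $(m,n)$. Then $s$ maps the line to a parallel line that coincides with the original one modulo $\Z^2$, making its image in $\mathbb{T}$ genuinely $s$-invariant, while the half-integer offset guarantees the curve passes through two of the fixed points \emph{other} than $p$ rather than through $p$ itself. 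A clean way to package this is to note that $s$ acts on $\mathbb{T}$ with exactly four fixed points, an $s$-invariant simple closed curve of nonzero class must pass through exactly two of them, and one has the freedom to arrange which two; choosing the pair disjoint from $p$ completes the proof. The verification that the resulting curve is simple, closed, realises $w$, and is disjoint from $p$ is then a routine coordinate computation that I would not grind through in detail.
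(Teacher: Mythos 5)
Your proposal is correct and takes essentially the same route as the paper: both model $\mathbb{T}$ as a flat torus with $s$ induced by $x\mapsto -x$, take straight-line representatives of the primitive class, observe that $s$-invariance forces the line's offset to be a half-lattice translate (equivalently, the line must pass through fixed points of $s$), and then pick the invariant parallel geodesic through the pair of fixed points not containing $p$. The paper phrases this via the equation $ax+by+c=0$ and a parity case split on $a+b$ to choose $c\in\{0,1\}$, which is the same choice you make with your determinant-parity condition on $e$.
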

\begin{proof}
	Let $\mathbb{T} = \R^2 / \Gamma$, where $\Gamma = 2\Z \oplus 2\Z \hookrightarrow \R \oplus \R = \R^2$. It is convenient to consider the involution $s: (x, y) \mapsto (2-x, 2-y) \mod \Gamma$ and the marked point $p = (1, 1) \mod \Gamma$. Any primitive homology class can be realized by the curve $ax+by+c=0 \mod 2$ with $a, b \in \Z, c \in \R$ and $\gcd(a, b) = 1$. We claim that this curve is symmetric if and only if $c=0 \mod 2$ or $c=1 \mod 2$. Indeed,
	$$a(2-x) + b(2-y) + c = -ax - by - c + 2a + 2b +2c = -(ax+by+c) + 2(a+b) + 2c = 2c \mod 2,$$
	because $2(a+b)=0 \mod 2$. Hence we need $2c=0 \mod 2$, which implies $c=0 \mod 2$ or $c=1 \mod 2$.
	
	If $a+b = 1 \mod 2$, then we take $c=0$. Let us check that in this case the curve $ax+by=0$ is disjoint from $p$. Indeed, if $(1, 1)$ satisfy this equation, then $a+b = 0 \mod 2$, which is a contradiction.
	
	If $a+b = 0 \mod 2$, then we take $c=1$. Let us check that in this case the curve $ax+by=1$ is disjoint from $p$. Indeed, if $(1, 1)$ satisfy this equation, then $a+b+1=1 \mod 2$, which is also contradiction.
\end{proof}

Now consider a torus $\mathbb{T}$ with two marked point $p$ and $q$, equipped with a hyperelliptic involution $s$ stabilising each of the points $p$ and $q$. Choose a symplectic basis $\{e_1, e_2\}$ of $\H_1(\mathbb{T}, \Z)$, such that $e_2$ can be realized on $\mathbb{T}$ by an $s$-invariant simple closed curve disjoin from $p$ and $q$. Define an Sp-quadratic form $\nu$ on $\H_1(\mathbb{T}, \Z)$ by $\nu(e_1) = 0$ and $\nu(e_2) = 1$.
\begin{lemma} \label{t21}
	Any nonzero primitive homology class $w \in \H_1(\mathbb{T}, \Z)$ with $\nu(y) = 1$ can be realised by an $s$-invariant simple closed curve on $\mathbb{T}$ disjoint from $p$ and $q$.
\end{lemma}
\begin{proof}
	As in the proof of Lemma \ref{t1}, let $\mathbb{T} = \R^2 / \Gamma$, where $\Gamma = 2\Z \oplus 2\Z \hookrightarrow \R \oplus \R = \R^2$. The involution is  $s: (x, y) \mapsto (2-x, 2-y) \mod \Gamma$. Now we take $p=(1, 1)$ and $q=(1, 0)$. We can assume that $e_1 = (2, 0)$ and $e_2=(0, 2)$. One can easily check that a curve $ax+by+c=0$ defines a homology class $w$ with $\nu(w) = 1$ if and only if $a = 1 \mod 2$ and $b=0 \mod 2$.
	
	We take $c=0$. It is suffices to check that in this case the curve $ax+by=0$ defining a homology class $w$ with $\nu(w) = 1$ is disjoint from $p$ and $q$ (we have already showed in the proof on Lemma \ref{t1} that such a curve is symmetric). Indeed, after substitution $p=(1, 1)$ and $q=(1, 0)$ in the equation $ax+by=0$, we obtain
	$a+b = 0 \mod 2$
	and
	$a = 0 \mod 2.$
	Both of these conditions do not hold because we have $a = 1 \mod 2$ and $b=0 \mod 2$.
\end{proof}
\begin{lemma} \label{t22}
	Any nonzero primitive homology class $w \in \H_1(\mathbb{T}, \Z)$ with $\nu(w) = 0$ can be realised by a simple closed curve $\zeta$ on $\mathbb{T}$ disjoint from $p$ and $q$, such that $\zeta \cap s(\zeta) = \varnothing$ and such that $\zeta$ and $s(\zeta)$ are not isotopic on $\mathbb{T} \setminus \{p, q\}$.
\end{lemma}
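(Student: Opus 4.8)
The plan is to work in the explicit model set up in the previous two lemmas: write $\mathbb{T} = \R^2/\Gamma$ with $\Gamma = 2\Z \oplus 2\Z$, $s(x,y) = (2-x,2-y)$, $p = (1,1)$, $q = (1,0)$, $e_1 = (2,0)$, $e_2 = (0,2)$, and realize the class $w$ by a line $ax + by + c = 0$ with $a,b \in \Z$, $\gcd(a,b) = 1$ and $c \in \R$. First I would translate the hypothesis $\nu(w) = 0$ into a parity condition on $(a,b)$: the computation already used in the proof of Lemma \ref{t21} shows that $\nu(w) = 1$ exactly when $a$ is odd and $b$ is even, so under $\gcd(a,b)=1$ the condition $\nu(w) = 0$ is equivalent to $b$ being odd. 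Unlike in Lemmas \ref{t1} and \ref{t21}, here I deliberately choose $c \notin \Z$ --- concretely $c = \tfrac12$ --- so that the curve $\zeta = \{ax + by + \tfrac12 = 0\}$ is \emph{not} $s$-invariant.

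Next I would record the elementary facts about $\zeta$ and $s(\zeta)$. Since $\gcd(a,b)=1$, $\zeta$ is a simple closed curve representing $w$. Writing $f(x,y) = ax + by$, the curve $\zeta$ is the level set $\{f \equiv -\tfrac12 \pmod 2\}$, while a direct substitution as in Lemma \ref{t1} gives $s(\zeta) = \{f \equiv \tfrac12 \pmod 2\}$; in particular $s(\zeta)$ is parallel to $\zeta$, and since $\tfrac12 \not\equiv -\tfrac12 \pmod 2$ the two curves are distinct, hence disjoint, so $\zeta \cap s(\zeta) = \varnothing$. Moreover $f(p) = a+b$ and $f(q) = a$ are integers and so cannot meet the half-integer level $-\tfrac12 \bmod 2$; thus $\zeta$, and likewise $s(\zeta)$, misses both marked points.

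The heart of the argument --- and the step I expect to require the most care --- is showing that $\zeta$ and $s(\zeta)$ are not isotopic in $\mathbb{T} \setminus \{p,q\}$. The two parallel curves cobound two annuli in the closed torus, and they are isotopic on the twice-punctured torus if and only if one of these annuli is free of marked points; hence it suffices to prove that $p$ and $q$ lie in different complementary annuli. This is where the parity of $b$ re-enters: since $f(p) - f(q) = b$ is odd, the values $f(p)$ and $f(q)$ are antipodal on the circle $\R/2\Z$, and because $a$ and $a+b$ are integers the set $\{f(p) \bmod 2,\, f(q) \bmod 2\}$ is forced to be exactly $\{0,1\}$. The curves $\zeta$ and $s(\zeta)$ sit at the levels $\tfrac32$ and $\tfrac12$, which separate $0$ from $1$ on $\R/2\Z$; therefore $p$ and $q$ fall into different annuli, no annulus between $\zeta$ and $s(\zeta)$ is unpunctured, and the two curves are non-isotopic in $\mathbb{T} \setminus \{p,q\}$. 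I would close by noting that the construction is uniform in $(a,b)$, so the single choice $c = \tfrac12$ handles every class $w$ with $\nu(w) = 0$.
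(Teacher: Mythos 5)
Your proposal is correct, and it uses the same explicit flat model and the same curve $\zeta=\{ax+by+\tfrac12=0\}$ as the paper; the only genuine divergence is in the final, and most delicate, step of showing that $\zeta$ and $s(\zeta)$ are not isotopic in $\mathbb{T}\setminus\{p,q\}$. The paper handles this by choosing $A\in\SL(2,\Z)$ with $Ae_1=w$, observing that $\nu(w)=0$ restricts $A\bmod 2$ to three possible matrices, applying the linear automorphism $A^{-1}$ to normalize $\zeta$ and $s(\zeta)$ to the horizontal lines $y=\tfrac12$ and $y=\tfrac32$, and then checking in each of the three cases that the images of $p$ and $q$ land in different complementary annuli. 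You instead argue directly with the height function $f(x,y)=ax+by$ viewed as a fibration $\mathbb{T}\to\R/2\Z$: the complement of $\zeta\cup s(\zeta)$ consists of the preimages of the two arcs cut out by the levels $\tfrac12$ and $\tfrac32$, and since $f(p)-f(q)=b$ is odd (which is exactly the translation of $\nu(w)=0$ under $\gcd(a,b)=1$), the integer levels $f(p)$ and $f(q)$ reduce to $0$ and $1$ modulo $2$ and hence lie on opposite sides, so neither annulus is free of marked points. This is cleaner: it eliminates the case analysis over $A\bmod 2$ and makes visible the precise role of the hypothesis $\nu(w)=0$, at the cost of invoking the standard fact that two disjoint isotopic essential curves must cobound an unpunctured annulus --- a fact the paper also uses implicitly when it declares the three normalized configurations non-isotopic. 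Both arguments are at the same level of rigor, and yours is, if anything, the more transparent of the two.
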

\begin{proof}
	Let $\mathbb{T}$, the involution $s$, the points $p, q$ and the homology classes $e_1, e_2$ be as in the proof of Lemma \ref{t22}. Consider the curve $\zeta$ representing a primitive homology class $w$ with $\nu(w) = 0$, defined by the equation $ax+by+\frac{1}{2}=0$. Since $s(\zeta)$ is given by the equation $ax+by+\frac{3}{2}=0$, then $\zeta \cap s(\zeta) = \varnothing$. Moreover, $\zeta$ is disjoint from $p$ and $q$. It suffices to check that $\zeta$ and $s(\zeta)$ are not isotopic.
	
	Since $w$ is a primitive element, then there exists $z \in \H_1(\mathbb{T}, \Z)$ such that $(w, z)$ is a symplectic basis of $\H_1(\mathbb{T}, \Z)$. 
	Let $A \in \SL(2, \Z)$ be a matrix such that $Ae_1 = w$ and $Ae_2 = z$. Since $\nu(w) = 0$, then $a = 0 \mod 2$ or $b=1 \mod 2$. Hence $A \mod 2$ has one of the following forms:
	\begin{equation} \label{Amat}
	\begin{pmatrix}
	1 & 0 \\
	0 & 1 
	\end{pmatrix}, \begin{pmatrix}
	1 & 1 \\
	0 & 1 
	\end{pmatrix}, \begin{pmatrix}
	1 & 0 \\
	1 & 1 
	\end{pmatrix}.
	\end{equation}
	
	Consider the linear automorphism of $\mathbb{T}$ given by the matrix $A^{-1}$. The curves $A^{-1}(\zeta)$ and $A^{-1}(s(\zeta))$ are given by the equations $y = \frac{1}{2}$ and $y = \frac{3}{2}$, respectively. Since $A = A^{-1} \mod 2$, we have that the set $\{A^{-1}(p), A^{-1}(q)\}$ coincides with one of the following sets corresponding to the matrices (\ref{Amat}):
	$$\{(1, 1), (1, 0)\}, \;\; \{(0, 1), (1, 0)\}, \;\; \{(1, 0), (1, 1)\}.$$
	In each of these cases the cures $A^{-1}(\zeta)$ and $A^{-1}(s(\zeta))$ are not isotopic on $\mathbb{T} \setminus \{A^{-1}(p), A^{-1}(q)\}$. Hence we obtain that $\zeta$ and $s(\zeta)$ are also not isotopic on $\mathbb{T} \setminus \{p, q\}$.
\end{proof}

\subsection{Proof of Theorem \ref{mainth}}

\begin{proof}[Proof of Theorem \ref{mainth}]
	Assume the converse and consider a nontrivial relation
	\begin{equation}\label{rel}
	\sum_{i=1}^n \lambda_i \A_{V^i_1, V^i_2, V^i_3} = 0, \;\; \lambda_i \in \Z,
	\end{equation}
	where $\{(V^i_1, V^i_2, V^i_3) \; | \; i = 1, \dots, n\}$ is a family of orthogonal splittings of $\H$, that are pairwise different as the unordered splittings ans satisfy the condition (\ref{cond}).
	For each element $x \in \H$ there exists unique decompositions
	$$x = x^i_1+x^i_2 + x^i_3,  \;\;\;\; \; x^i_j \in V^i_j, \; \; i = 1, \dots, n, \;\; j = 1, 2, 3.$$ The following result is proved in \cite{Gaifullin_J} for arbitrary genus.
	\begin{lemma} \cite[Lemma 4.5]{Gaifullin_J}\label{ex}
		There exists a nonzero primitive homology class $x \in \H$ such that
		
		(1) all homology classes $x^i_j$ are nonzero, $i = 1, \dots, n, \;\; j = 1, 2, 3,$
		
		(2) if $1 \leq s < t \leq n$, then the unordered sets $\{x^s_1, x^s_2, x^s_3\}$ and $\{x^t_1, x^t_2, x^t_3\}$ do not coincide.
	\end{lemma}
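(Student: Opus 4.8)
The plan is to prove this by a genericity argument: I will show that the set of $x \in \H$ violating (1) or (2) is contained in a finite union of proper subgroups of $\H \cong \Z^6$, and that such a union can never exhaust $\H$. Write $P^i_j \colon \H \to \H$ for the projection onto $V^i_j$ along $V^i_{j'} \oplus V^i_{j''}$ (where $\{j,j',j''\} = \{1,2,3\}$), so that $x^i_j = P^i_j(x)$; these are $\Z$-linear maps with $\mathrm{im}(P^i_j) = V^i_j$.

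Condition (1) fails for a fixed pair $(i,j)$ exactly when $x$ lies in the rank-$4$ subgroup $\ker P^i_j = V^i_{j'} \oplus V^i_{j''}$; there are $3n$ such subgroups, all proper. For condition (2), I first note that once (1) holds the three classes $x^i_1, x^i_2, x^i_3$ attached to a single splitting are pairwise distinct, since an equality $x^i_j = x^i_{j'}$ with $j \neq j'$ would force $x^i_j \in V^i_j \cap V^i_{j'} = 0$. Hence for $s < t$ the two unordered triples are genuine $3$-element sets, and they coincide precisely when $x^s_j = x^t_{\pi(j)}$ for all $j$ for some $\pi \in S_3$; for fixed $s, t, \pi$ this says $x$ lies in the kernel $L_{s,t,\pi}$ of the $\Z$-linear map $x \mapsto \bigl( (P^s_j - P^t_{\pi(j)})(x) \bigr)_{j=1,2,3}$. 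The crucial observation is that each $L_{s,t,\pi}$ is a \emph{proper} subgroup: were this map identically zero we would have $V^s_j = \mathrm{im}(P^s_j) = \mathrm{im}(P^t_{\pi(j)}) = V^t_{\pi(j)}$ for all $j$, so splittings $s$ and $t$ would agree as unordered splittings, contrary to hypothesis.

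Putting this together, the set of $x$ failing (1) or (2) is contained in the union of the $3n$ rank-$4$ subgroups together with the finitely many $L_{s,t,\pi}$: any $x$ failing (2) either already fails (1), or has all projections nonzero and hence lies in some $L_{s,t,\pi}$. Tensoring with $\Q$, each of these subgroups has proper $\Q$-span (rank $4 < 6$ in the first case, a nonzero linear condition in the second), and since $\Q$ is infinite a $\Q$-vector space is never a finite union of proper subspaces; so some vector avoids all of them. Clearing denominators and dividing by the $\gcd$ of its coordinates produces a primitive $x \in \H$ still lying outside every one of these subspaces, and this $x$ satisfies (1) and (2). The step to get right — the main point of the argument — is the reduction of condition (2): one must use (1) to rule out repeated projections, so that coincidence of the unordered triples becomes a single linear equation per permutation, and one must invoke the distinctness of the unordered splittings to guarantee that each such equation cuts out a proper subgroup.
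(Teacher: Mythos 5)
Your argument is correct: reducing both conditions to membership in finitely many proper subgroups (the kernels $V^i_{j'}\oplus V^i_{j''}$ for (1), and the kernels $L_{s,t,\pi}$ for (2), with properness of the latter forced by the hypothesis that the splittings are pairwise distinct as unordered splittings), passing to $\Q$-spans, and using that a $\Q$-vector space is not a finite union of proper subspaces is exactly the genericity argument of the cited source \cite[Lemma 4.5]{Gaifullin_J}, which the paper itself quotes without reproducing a proof. No gaps; the care you take in using (1) to ensure the triples are genuine $3$-element sets before introducing the permutation $\pi$ is precisely the point that needs attention.
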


Choose a primitive homology class $x \in \H$ satisfying the conditions (1) and (2) of Lemma \ref{ex}. Then every $x^i_j$ can be uniquely written as $x^i_j = n^i_j a^i_j$, where $n^i_j \in \mathbb{N}$ and $a^i_j \in \H$ is a primitive homology class. This following statement is also proved in \cite{Gaifullin_J} for arbitrary genus.
\begin{lemma}  \cite[after proof of Lemma 4.5]{Gaifullin_J} \label{ex2}
	If $1 \leq s < t \leq n$, then the unordered sets $\{a^s_1, a^s_2, a^s_3\}$ and $\{a^t_1, a^t_2, a^t_3\}$ do not coincide.
\end{lemma}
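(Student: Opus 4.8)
The plan is to argue by contradiction, reducing the statement about the primitive directions $a^i_j$ to the statement about the full classes $x^i_j$, which is exactly condition (2) of Lemma~\ref{ex}. The crucial observation is that for a \emph{single} splitting the three primitive vectors $a^i_1, a^i_2, a^i_3$ are linearly independent, so that the multiplicities $n^i_j$ are completely recovered from $x$ together with the unordered set $\{a^i_1, a^i_2, a^i_3\}$.

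First I would record that within one splitting the classes $a^s_1, a^s_2, a^s_3$ are linearly independent. Indeed $\H = V^s_1 \oplus V^s_2 \oplus V^s_3$, and each $a^s_j$ is a nonzero element of the summand $V^s_j$, so any linear relation among them is a relation across a direct sum and must have all coefficients zero. In particular the set $\{a^s_1, a^s_2, a^s_3\}$ has exactly three (distinct) elements, and the same holds for the $t$-th splitting.

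Now suppose, contrary to the claim, that $\{a^s_1, a^s_2, a^s_3\} = \{a^t_1, a^t_2, a^t_3\}$ as unordered sets for some $s < t$. Since the three vectors on each side are distinct, matching equal elements yields a permutation $\sigma$ of $\{1,2,3\}$ with $a^t_k = a^s_{\sigma(k)}$. Expanding $x$ in both decompositions and using $x^i_j = n^i_j a^i_j$, I would compute
$$\sum_{j=1}^3 n^s_j a^s_j = x = \sum_{k=1}^3 n^t_k a^t_k = \sum_{k=1}^3 n^t_k a^s_{\sigma(k)} = \sum_{j=1}^3 n^t_{\sigma^{-1}(j)} a^s_j.$$
Linear independence of $a^s_1, a^s_2, a^s_3$ then forces $n^s_j = n^t_{\sigma^{-1}(j)}$ for every $j$, and hence
$$x^s_j = n^s_j a^s_j = n^t_{\sigma^{-1}(j)} a^t_{\sigma^{-1}(j)} = x^t_{\sigma^{-1}(j)}.$$
Thus the unordered sets $\{x^s_1, x^s_2, x^s_3\}$ and $\{x^t_1, x^t_2, x^t_3\}$ coincide, contradicting condition (2) of Lemma~\ref{ex}. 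This contradiction proves the statement.

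I do not expect a genuine obstacle here: the only point requiring care is that each $a^i_j$ is the \emph{uniquely} determined primitive representative of the ray spanned by the nonzero class $x^i_j$ (so that the hypothesized equality $\{a^s_j\} = \{a^t_k\}$ is an honest equality of vectors rather than merely of the lines they span), after which the direct-sum bookkeeping and linear independence do all the work.
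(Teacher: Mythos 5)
Your argument is correct: the reduction to condition (2) of Lemma \ref{ex} via linear independence of the $a^s_j$ (coming from the direct sum decomposition) and uniqueness of the primitive representative is exactly the intended argument. The paper itself gives no proof, only a citation to Gaifullin, and the argument sketched there after the proof of Lemma 4.5 is the same one you give.
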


For each $i$ let $(\gamma^i, \delta^i)$ be asymmetric separating pair such that the splitting $(V^i_1, V^i_2, V^i_3)$ corresponds to $(\gamma^i, \delta^i)$. Let $X^i_1, X^i_2, X^i_3$ be the connsected components of $\S_3 \setminus \{\gamma^i, \delta^i\}$ such that $V^i_j = \H_1(X^i_j, \Z)$ for $j = 1, 2, 3$. Since $X^i_1$ is a one-punctured torus, then by Lemma \ref{t1} the homology class $a^i_1$ can be realised by a curve $\eta^i_1$ on $X^i_1$ disjoint from $\gamma^i$. Similarly, the homology class $a^i_3$ can be realised by a curve $\eta^i_3$ on $X^i_3$ disjoint from $\delta^i$.

For the homology class $a^i_2$ we have two possibilities: either $\omega_0(a^i_2) = 1$ or $\omega_0(a^i_2) = 0$. If $\omega_0(a^i_2) = 1$ then by Lemma \ref{t21} $a^i_2$  can be realised by an $s$-invariant simple closed curve $\eta^i_2$ on $X^i_2$ disjoint from $\gamma^i$ and $\delta^i$. If $\omega_0(a^i_2) = o$ then by Lemma \ref{t22} $a^i_2$  can be realised by a simple closed curve $\eta^i_2$ on $\mathbb{T}$ disjoint from $p$ and $q$, such that $\eta^i_2 \cap s(\nu^i_2) = \varnothing$ and such that $\eta^i_2$ and $s(\eta^i_2)$ are not isotopic.

For each $i$ consider the basis skew-symmetric 1-cycle for $x$ defined by
$$C_i = \sum_{i=j}^3 \dfrac{n_j}{2} \Bigl(\nu^i_j + s\bigl(\overleftarrow{\nu^i_j}\bigr)\Bigr).$$
Denote by $M^i$ its support. We will prove the following statement in the next section.

Now let us finish the proof of Theorem \ref{mainth}. By Proposition \ref{prop1} the inclusions $\Stab_{\SI_3}(M^i) \hookrightarrow \SI_3$ induce an injective homomorphism
\begin{equation} \label{c-l2}
\iota: \bigoplus_{i=1}^n \H_{2}(\Stab_{\SI_3}(M^i), \Z) \hookrightarrow \H_{2}(\SI_3), \Z).
\end{equation}
Since $\iota\Bigl(\A(T_{\gamma^i}, T_{\delta^i})\Bigr) = \A_{V^i_1, V^i_2, V^i_3}$, then our assumption (\ref{rel}) implies 
$$\iota(\sum_{i=1}^n \lambda \A(T_{\gamma^i}, T_{\delta^i})) = 0.$$ 
By Proposition \ref{nonzero} we obtain that for each $i$ the homology class $$\A(T_{\gamma^i}, T_{\delta^i}) \in \H_{2}(\Stab_{\SI_3}(M^i), \Z)$$ 
is nonzero and has the infinite order, so $\lambda_i = 0$ for all $i = 1, \dots, n$. This concludes the proof of Theorem \ref{mainth}.
\end{proof}


\begin{thebibliography}{00}

\addcontentsline{toc}{section}{\bibname}

\bibitem{ACampo} N. A'Campo, Tresses, monodromie et le groupe symplectique, Comment. Math. Helv. 54 (1979) 318–327.


\bibitem{Bestvina} M.Bestvina, K.-U.Bux, D.Margalit, The dimension of the Torelli group, J. Amer. Math. Soc. 23:1 (2010), 61–105, arXiv:0709.028

\bibitem{BC} J.S.Birman, R.Craggs, The $\mu$-Invariant of 3-Manifolds and Certain Structural Properties of the Group of Homeomorphisms of a Closed, Oriented 2-Manifold, Transactions of the American Mathematical Society 237 (1978): 283-309.

\bibitem{BH11} J.S.Birman and H.M.Hilden, On the mapping class groups of closed surfaces as covering
spaces, in Advances in the Theory of Riemann surfaces (Proc. Conf., Stony Brook, NY,
1969), Annals of Mathematics Studies, Vol. 66. Princeton University Press, Princeton,
NJ, 1971, pp. 81–115.




\bibitem{Brendle} T.Brendle, L.Childers, and D.Margalit, Cohomology of the hyperelliptic Torelli group, Israel Journal of Mathematics 195.2 (2013): 613-630.

\bibitem{BrendleFarb} T.Brendle, and B.Farb, The Birman–Craggs–Johnson homomorphism and abelian cycles in the Torelli group. Mathematische Annalen 338.1 (2007): 33-53.

\bibitem{BrendleMP} T.Brendle, D.Margalit, and A.Putman. Generators for the hyperelliptic Torelli group and the kernel of the Burau representation at $ t=-1$. Inventiones mathematicae 200.1 (2015): 263-310.

\bibitem{CEP22} T. Church, M. Ershov, A. Putman, On finite generation of the Johnson filtrations, J. Eur. Math. Soc., 24:8 (2022), 2875--2914; arXiv:1711.04779.

\bibitem{Brown} K.S.Brown, Cohomology of groups, Graduate Texts in Mathematics, vol. 87, Springer-Verlag, New York, 1982.

\bibitem{DiPa13} A. Dimca, S. Papadima, Arithmetic group symmetry and
finiteness properties of Torelli groups'',Ann. Math., 177:2 (2013), 395--423; arXiv:1002.0673.

\bibitem{ErHe18} M. Ershov, S. He, On finiteness properties of the Johnson filtrations, Duke Math. J., 167:9 (2018), 1713--1759; arXiv:1703.04190.

\bibitem{Gaifullin_K3} A.A. Gaifullin, Finite generation of abelianizations of the genus 3 Johnson kernel and the commutator subgroup of the Torelli group for $\mathrm {Out}(F_3)$. arXiv preprint arXiv:2507.20710 (2025).


\bibitem{Gaifullin_BCJ} A.A.Gaifullin, On an extension of the Birman–Craggs–Johnson homomorphism, Russian Mathematical Surveys 72.6 (2017): 1171.

\bibitem{Gaifullin_T} A. A. Gaifullin, On infinitely generated homology of Torelli groups, St. Petersburg Mathematical Journal 35.6 (2024), pp. 959–993.

\bibitem{Gaifullin_T3} A.A.Gaifullin, On spectral sequence for the action of genus $ 3$ Torelli group on the complex of cycles, IZV MATH, 2021, 85.

\bibitem{Gaifullin_J} A.A. Gaifullin, On the top homology group of Johnson kernel, Mosc. Math. J. 22:1 (2022)

\bibitem{Hain} R.Hain, The rational cohomology ring of the moduli space of abelian 3-folds, Math. Res. Lett. 9:4
(2002), 473–491, arXiv:0203057.

\bibitem{Ivanov} N.V. Ivanov, Subgroups of Teichmuller modular groups, Translations of Mathematical Monographs
115, Amer. Math. Soc., 1992, xii+127 pp.

\bibitem{Kordek} K.Kordek. The infinite topology of the hyperelliptic locus in Torelli space. Geometriae Dedicata 187.1 (2017): 89-105.

\bibitem{JohnsonT} D.Johnson. A survey of the Torelli group. In Low-dimensional
topology (San Francisco, CA, 1981), volume 20 of Contemporary
Mathematics, pages 165–179. American Mathematical Society, Providence, RI, 1983.

\bibitem{Johnson3} D.Johnson, The structure of the Torelli group III: The Abelianization of I, Topology 24:2 (1985), 127–144.


\bibitem{Morita0} S.Morita, Casson’s invariant for homology 3-spheres and characteristic classes of surface bundles I,
Topology 28:3 (1989), 305–323.

\bibitem{Morita} S.Morita, On the structure of the Torelli group and the Casson invariant, Topology 30:4 (1991),
603–621.

\bibitem{McCullough} D.McCullough, A.Miller, The genus 2 Torelli group is not finitely generated, Topology Appl. 22:1
(1986), 43–49.

\bibitem{Mess} G.Mess, The Torelli groups for genus 2 and 3 surfaces, Topology 31:4 (1992), 775–790.

\bibitem{Minahan} D. Minahan, The second rational homology of the Torelli group is finitely generated (2023), arXiv:2307.07082.

\bibitem{MinahanPutman}  D. Minahan and A. Putman, The second rational homology of the Torelli group (2025), arXiv:2504.00211.

\bibitem{Sav} N.Saveliev, Lectures on the topology of 3-manifolds. An introduction to the Casson invariant, Walter
de Gruyter, Berlin–New York, 1999, ix+199 pp.

\bibitem{SawinMO} Will Sawin (https://mathoverflow.net/users/18060/will-sawin), Index of the hyperelliptic mapping class group’s image in the integral symplectic group, URL (version: 2025-10-23): https://mathoverflow.net/q/502008

\bibitem{Spiridonov_T3} I. Spiridonov, The top homology group of the genus 3 Torelli group,. Journal of Topology, 16(3) (2023), 1048-1092.

\bibitem{Spiridonov_Kg}  I. Spiridonov,  On the structure of the top homology group of the Johnson kernel, Algebraic \& Geometric Topology 24, no. 7 (2024): 3641-3668.

\end{thebibliography}
\end{document}